\theoremstyle{plain}
\newtheorem{theorem}{Theorem}[section]
\newtheorem{proposition}[theorem]{Proposition}
\newtheorem{corollary}[theorem]{Corollary}
\newtheorem{lemma}[theorem]{Lemma}
\theoremstyle{definition}
\newtheorem{remark}[theorem]{Remark}
\newtheorem{question}[theorem]{Question}
\newtheorem{example}[theorem]{Example}
\newcommand{\abs}[1]{\lvert#1\rvert}
\newcommand{\norm}[1]{\lVert#1\rVert}
\newcommand{\bigabs}[1]{\bigl\lvert#1\bigr\rvert}
\newcommand{\bignorm}[1]{\bigl\lVert#1\bigr\rVert}
\newcommand{\Bignorm}[1]{\Bigl\lVert#1\Bigr\rVert}
\newcommand{\term}[1]{{\textit{\textbf{#1}}}}   
\renewcommand{\mid}{\::\:}
\newcommand{\goeso}{\xrightarrow{\mathrm{o}}}	
\newcommand{\goesun}{\xrightarrow{\mathrm{un}}} 
\newcommand{\goesuo}{\xrightarrow{\mathrm{uo}}}	
\newcommand{\goesmu}{\xrightarrow{\mu}}	
\newcommand{\goesw}{\xrightarrow{\mathrm{w}}}	
\newcommand{\goesae}{\xrightarrow{\mathrm{a.e.}}}	
\def\one{\mathbb 1}
\DeclareMathOperator{\Range}{Range}
\begin{document}

\title[Unbounded norm convergence]
{Unbounded Norm Convergence\\ in Banach Lattices}

\author{Y. Deng}
\address{School of Mathematics, Southwest Jiaotong University,
  Chengdu, Sichuan, 610000, China.}
\email{15208369715@163.com}

\author{M. O'Brien}
\address{Department of Mathematical and Statistical Sciences,
         University of Alberta, Edmonton, AB, T6G\,2G1, Canada.}
\email{mjbeauli@ualberta.ca}

\author{V.G. Troitsky}
\address{Department of Mathematical and Statistical Sciences,
         University of Alberta, Edmonton, AB, T6G\,2G1, Canada.}
\email{troitsky@ualberta.ca}

\thanks{The third author was supported by an NSERC grant.}
\keywords{Banach lattice, un-convergence, uo-convergence, order convergence, AL-representation}
\subjclass[2010]{Primary: 46B42. Secondary: 46A40}

\date{\today}

\begin{abstract}
   A net $(x_\alpha)$ in a vector lattice $X$ is unbounded order convergent to $x \in X$ if $\abs{x_\alpha - x} \wedge u$ converges to $0$ in order for all $u\in X_+$. This convergence has been investigated and applied in several recent papers by Gao et al. It may be viewed as a generalization of almost everywhere convergence to general vector lattices.
In this paper, we study a variation of this convergence for Banach lattices.  
A net $(x_\alpha)$ in a Banach lattice $X$ is unbounded norm convergent to $x$ if $\bignorm{\abs{x_\alpha - x} \wedge u}\to 0$ for all $u\in X_+$. We show that this convergence may be viewed as a generalization of convergence in measure. We also investigate its relationship with other convergences.
\end{abstract}

\maketitle

\section{Introduction}
We begin by recalling a few definitions. 
A net $(x_\alpha)_{\alpha \in A}$ in a vector lattice $X$ is said to be \term{order convergent} to $x\in X$ if there is a net $(z_\beta)_{\beta \in B}$ in $X$ such that $z_\beta \downarrow 0$ and for every $\beta \in B$, there exists $\alpha_0 \in A$ such that $\abs{x_\alpha -x} \leq z_\beta$ whenever $\alpha \geq \alpha_0$. For short, we will denote this convergence by $x_\alpha \goeso x$ and write that $x_\alpha$ is o-convergent to $x$. A net $(x_\alpha)_{\alpha \in A}$ in a vector lattice $X$ is \term{unbounded order convergent} to $x \in X$ if $\abs{x_\alpha - x} \wedge u \goeso 0$ for all $u\in X_+$. We denote this convergence by $x_\alpha \goesuo x$ and write that $x_\alpha$ uo-converges to $x$. We refer the reader to \cite{GTX} for a detailed exposition on uo-convergence and further references. In particular, for sequences in $L_p(\mu)$ spaces, uo-convergence agrees with almost everywhere (a.e.)\ convergence. Furthermore, if $X$ can be represented as an ideal (or, more generally, a regular sublattice) in $L_1(\mu)$, then the uo-convergence of sequences in $X$ agrees with the a.e.\ convergence in $L_1(\mu)$. Thus, uo-convergence may be viewed as a generalization of a.e.\ convergence to general vector lattices.

Throughout this paper, $X$ will stand for a Banach lattice. For a net $(x_\alpha)$ in $X$ we write $x_\alpha\to x$ if $(x_\alpha)$ converges to $x$ in norm, i.e., $\norm{x_\alpha-x}\to 0$. A net $(x_\alpha)$ in $X$ is \term{unbounded norm convergent} or \term{un-convergent} to $x$ if $\abs{x_\alpha - x} \wedge u\to 0$ for all $u\in X_+$; we then write $x_\alpha \goesun x$. Un-convergence was introduced in \cite{Troitsky:04} as a tool to study measures of non-compactness. In this paper, we study properties of un-convergence and its relationship to other convergences. In particular, we show that un-convergence may be viewed as a generalization of convergence in measure to general Banach lattices.

\section{Basic properties of un-convergence} 

Unless stated otherwise, we will assume that $X$ is a Banach lattice and all nets and vectors lie in $X$. We routinely use the following inequality:
\begin{math}
  (x+y)\wedge u\le x\wedge u+y\wedge u
\end{math}
for all $x,y,u\in X^+$.

\begin{lemma} \label{basic}
\begin{enumerate}
\item\label{basic:diff} $x_\alpha \goesun x$ iff $(x_\alpha -x) \goesun 0$;
\item\label{basic:subs} If $x_\alpha \goesun x$, then $y_\beta \goesun x$ for any subnet $(y_\beta)$ of $(x_\alpha)$.
\item\label{basic:alg} Suppose $x_\alpha \goesun x$ and $y_\alpha \goesun y$. Then $ax_\alpha +by_\alpha \goesun ax+by$ for any $a, b \in \mathbb{R}$.
\item\label{basic:unique} If $x_\alpha \goesun x$ and $x_\alpha\goesun y$, then $x=y$.
\item\label{basic:mod} If $x_\alpha\goesun x$, then $\abs{x_\alpha} \goesun \abs{x}$.
\end{enumerate} 
\end{lemma}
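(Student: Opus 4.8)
The plan is to derive all five items from the definition together with three elementary facts: the stated subadditivity $(x+y)\wedge u\le x\wedge u+y\wedge u$, the positive homogeneity of the lattice operations $\lambda(p\wedge q)=(\lambda p)\wedge(\lambda q)$ for $\lambda\ge 0$, and the monotonicity of the lattice norm, $0\le p\le q\Rightarrow\norm p\le\norm q$. Item (i) is then immediate, since both $x_\alpha\goesun x$ and $(x_\alpha-x)\goesun 0$ unwind to the identical family of scalar statements $\norm{\abs{x_\alpha-x}\wedge u}\to 0$, $u\in X_+$. Item (ii) is equally direct: for each fixed $u\in X_+$ the real net $\bigl(\norm{\abs{x_\alpha-x}\wedge u}\bigr)$ converges to $0$, any subnet of a convergent net in $\mathbb R$ has the same limit, and $\bigl(\norm{\abs{y_\beta-x}\wedge u}\bigr)$ is such a subnet.

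For item (iii) I would first use (i) to reduce to showing $a(x_\alpha-x)+b(y_\alpha-y)\goesun 0$. Fix $u\in X_+$. By the triangle inequality $\abs{a(x_\alpha-x)+b(y_\alpha-y)}\le\abs a\,\abs{x_\alpha-x}+\abs b\,\abs{y_\alpha-y}$; applying the monotone map $p\mapsto p\wedge u$ and then the stated subadditivity to the two positive summands gives
\[
  \abs{a(x_\alpha-x)+b(y_\alpha-y)}\wedge u
  \le \bigl(\abs a\,\abs{x_\alpha-x}\bigr)\wedge u+\bigl(\abs b\,\abs{y_\alpha-y}\bigr)\wedge u.
\]
It then remains to see that each summand tends to $0$ in norm, and this is where the only genuine subtlety lies: a scalar $\abs a>1$ cannot simply be dropped. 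Instead I would rescale, writing $\bigl(\abs a\,\abs{x_\alpha-x}\bigr)\wedge u=\abs a\bigl(\abs{x_\alpha-x}\wedge\tfrac{u}{\abs a}\bigr)$ when $a\ne 0$ (the summand being $0$ when $a=0$), so that its norm equals $\abs a\,\norm{\abs{x_\alpha-x}\wedge\tfrac{u}{\abs a}}\to 0$ because $\tfrac{u}{\abs a}\in X_+$ and $x_\alpha\goesun x$. The $b$-term is handled identically, and monotonicity of the norm applied to the displayed inequality finishes (iii).

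Items (iv) and (v) are then short consequences. For (iv), fix $u\in X_+$; from $\abs{x-y}\le\abs{x-x_\alpha}+\abs{x_\alpha-y}$ and the same monotonicity and subadditivity as above, $\norm{\abs{x-y}\wedge u}\le\norm{\abs{x_\alpha-x}\wedge u}+\norm{\abs{x_\alpha-y}\wedge u}\to 0$; since the left-hand side does not depend on $\alpha$ it must equal $0$, so $\abs{x-y}\wedge u=0$, and choosing $u=\abs{x-y}$ gives $\abs{x-y}=0$, i.e.\ $x=y$. For (v), the reverse triangle inequality $\bigl\lvert\,\abs{x_\alpha}-\abs x\,\bigr\rvert\le\abs{x_\alpha-x}$ together with monotonicity of $p\mapsto p\wedge u$ yields $\bigl\lvert\,\abs{x_\alpha}-\abs x\,\bigr\rvert\wedge u\le\abs{x_\alpha-x}\wedge u$, whence monotonicity of the lattice norm gives $\norm{\bigl\lvert\,\abs{x_\alpha}-\abs x\,\bigr\rvert\wedge u}\le\norm{\abs{x_\alpha-x}\wedge u}\to 0$ for every $u\in X_+$.

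I expect the rescaling step in (iii) to be the main, and indeed only, obstacle worth flagging: it is the one place where the quantifier ``for all $u\in X_+$'' is used in an essential, non-monotone way, and it is easy to overlook when a scalar exceeds $1$. Everything else reduces to monotonicity of the norm and the two stated lattice inequalities.
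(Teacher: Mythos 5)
Your proof is correct and follows essentially the same route as the paper: items (i)--(iii) are exactly the ``straightforward'' arguments the paper omits, and your treatments of (iv) (testing against $u=\abs{x-y}$) and (v) (the inequality $\bigabs{\abs{x_\alpha}-\abs{x}}\le\abs{x_\alpha-x}$) coincide with the paper's proof. Your explicit handling of the rescaling $\bigl(\abs{a}\,\abs{x_\alpha-x}\bigr)\wedge u=\abs{a}\bigl(\abs{x_\alpha-x}\wedge\tfrac{u}{\abs{a}}\bigr)$ in (iii) is a worthwhile detail that the paper suppresses.
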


\begin{proof}
\eqref{basic:diff}, \eqref{basic:subs}, and \eqref{basic:alg} are straightforward. To prove~\eqref{basic:unique}, observe that
\begin{math}
  \abs{x-y}\le\abs{x-x_\alpha}+\abs{y-x_\alpha}
\end{math}
for every $\alpha$. Put $u=\abs{x-y}$; it follows that
\begin{displaymath}
  \abs{x-y}=\abs{x-y}\wedge u
  \le\abs{x-x_\alpha}\wedge u+\abs{y-x_\alpha}\wedge u\to 0.
\end{displaymath}
Finally, \eqref{basic:mod} follows from $\bigabs{\abs{x_\alpha}-\abs{x}}\le\abs{x_\alpha-x}$.
\end{proof}

\begin{remark}
  In particular, $x_\alpha\goesun x$ iff $\abs{x_\alpha-x}\goesun 0$. This often allows one to reduce general un-convergence to un-convergence of positive nets to zero.   
\end{remark}

\begin{example}
  It was observed in Examples~21 and~22 in \cite{Troitsky:04} that on $c_0$ un-convergence agrees with coordinate-wise convergence and on $C_0(\Omega)$ un-convergence agrees with uniform convergence on compacta. 
\end{example}

The proofs of the following two facts are straightforward.

\begin{proposition}\label{norm-un}
  If $x_\alpha\to 0$ then $x_\alpha\goesun 0$. For order bounded nets, un-convergence and norm convergence agree.
\end{proposition}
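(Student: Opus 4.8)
The plan is to treat the two assertions separately, noting that the first one holds for arbitrary nets and already supplies one half of the second. Throughout I would work directly from the definition $x_\alpha\goesun 0 \iff \bignorm{\abs{x_\alpha}\wedge u}\to 0$ for all $u\in X_+$, and exploit the fact that the norm on a Banach lattice is a lattice norm, hence monotone on the positive cone and invariant under taking modulus.

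For the first claim, fix $u\in X_+$. Since $0\le\abs{x_\alpha}\wedge u\le\abs{x_\alpha}$, monotonicity of the norm gives $\bignorm{\abs{x_\alpha}\wedge u}\le\bignorm{\abs{x_\alpha}}=\norm{x_\alpha}$. If $x_\alpha\to 0$ in norm, the right-hand side tends to $0$, so $\bignorm{\abs{x_\alpha}\wedge u}\to 0$ for every $u\in X_+$, which is exactly $x_\alpha\goesun 0$. No boundedness hypothesis enters here, so norm convergence always implies un-convergence.

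For the second claim, one implication is the statement just proved. For the converse I would first apply Lemma~\ref{basic}\eqref{basic:diff} to reduce to the case of limit $0$: replacing $x_\alpha$ by $x_\alpha-x$ preserves un-convergence, and it also preserves order boundedness, since if $\abs{x_\alpha}\le v$ for all $\alpha$ then $\abs{x_\alpha-x}\le v+\abs{x}$. Thus it suffices to show that an order bounded net with $x_\alpha\goesun 0$ converges to $0$ in norm. Order boundedness means there is a single $u\in X_+$ with $\abs{x_\alpha}\le u$ for all $\alpha$. The key observation is that this very $u$ may be used as the test vector in the definition of un-convergence: because $\abs{x_\alpha}\le u$ we have $\abs{x_\alpha}\wedge u=\abs{x_\alpha}$, and therefore $\norm{x_\alpha}=\bignorm{\abs{x_\alpha}\wedge u}\to 0$.

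There is essentially no serious obstacle here; the only point requiring a moment's care is matching the bounding element of the order interval with the arbitrary test vector permitted in the definition of un-convergence. The argument is thus a direct application of lattice-norm monotonicity in one direction and a judicious choice of $u$ in the other.
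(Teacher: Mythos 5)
Your proof is correct, and it is precisely the ``straightforward'' argument the paper has in mind (the paper omits the proof of \Cref{norm-un} entirely, stating it is straightforward): lattice-norm monotonicity applied to $0\le\abs{x_\alpha}\wedge u\le\abs{x_\alpha}$ for one direction, and taking the order bound itself as the test vector, after the standard reduction to limit $0$, for the other. Nothing is missing; in particular you correctly note that the reduction $x_\alpha\mapsto x_\alpha-x$ preserves order boundedness.
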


This justifies  the name \textit{unbounded} norm convergence.

\begin{proposition}\label{uo-oc-un}
  In an order continuous Banach lattice, uo-convergence implies un-convergence. 
\end{proposition}

\begin{example}\label{disj-ellinfty}
  Let $(e_n)$ be the standard unit sequence $(e_n)$ in $\ell_\infty$. It follows immediately from \Cref{norm-un} that it is not un-null. This serves as a counterexample to several ``natural'' statements. First, while \cite[Corollary~3.6]{GTX} asserts that every disjoint sequence is uo-null (see also \cite[Lemma~1.1]{Gao:14}), this example shows that a disjoint sequence need not be un-null. Second, since $(e_n)$ is uo-null, this example shows that the order continuity assumption in \Cref{uo-oc-un} cannot be dropped.

Third, it was observed in \cite[Theorem~3.2]{GTX} that for a net $(x_\alpha)$ in a regular sublattice $F$ of a vector lattice $E$, $x_\alpha\goesuo 0$ in $F$ iff $x_\alpha\goesuo 0$ in $E$. This fails for un-convergence: indeed, $(e_n)$ is un-null as a sequence in $c_0$, but not in $\ell_\infty$. However, it is easy to see that  if $x_\alpha\goesun 0$ in $X$ then  $x_\alpha\goesun 0$ in every sublattice of $X$.
\end{example}

\begin{example}\label{un-sublat-not}
  The next example shows that un-convergence in a sublattice does not imply un-convergence in the entire space even when the sublattice is a lattice copy of $\ell_1$. Let $X=\ell_1\oplus_\infty\ell_\infty$; let $(f_n)$ be the standard unit basis of $\ell_1$ and $(g_n)$ the standard unit sequence in $\ell_\infty$. Put $x_n=f_n\oplus g_n$. Let $Y$ be the closed span of $(x_n)$ in $X$. Since $(x_n)$ is a disjoint sequence in $X$, $Y$ is exactly the closed sublattice generated by $(x_n)$. Observe that
  \begin{displaymath}
    \Bignorm{\sum_{k=1}^n\alpha_kx_k}
    =\Bignorm{\sum_{k=1}^n\alpha_kf_k}\vee\Bignorm{\sum_{k=1}^n\alpha_kg_k}
    =\Bigl(\sum_{k=1}^n\abs{\alpha_k}\Bigr)\vee\Bigl(\bigvee_{k=1}^n\abs{\alpha_k}\Bigr)
    =\sum_{k=1}^n\abs{\alpha_k}.
  \end{displaymath}
for any $n$ and any scalars $\alpha_1,\dots,\alpha_n$. It follows that the basic sequence $(x_n)$ in $X$ is 1-equivalent to $(f_n)$ in $\ell_1$ and, therefore, 
$Y$ is an isometric lattice copy of $\ell_1$ in $X$. It is easy to see that $f_n\goesun 0$ in $\ell_1$; hence $x_n\goesun 0$ in $Y$.

However, we claim that $x_n\not\goesun 0$ in $X$. Indeed, let
$u=0\oplus\one=\bigvee_{k=1}^\infty g_k$. Then $x_n\wedge u=g_n$ for
every $n$, hence $(x_n\wedge u)$ does not converge to zero in $X$.
\end{example}

The following three results are similar to Lemmas~3.6 and~3.7, and Proposition~3.9 of \cite{GaoX:14}; we replace uo-convergence with un-convergence, and we do not require the space be order continuous in this case. The proofs are similar.

\begin{lemma}
  If $x_\alpha\goesun x$ then $\abs{x_\alpha}\wedge\abs{x}\to\abs{x}$ and $\norm{x}\le\liminf_\alpha\norm{x_\alpha}$.
\end{lemma}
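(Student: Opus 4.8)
The plan is to establish the first assertion first, since the norm inequality then follows quickly from it. The idea is to dominate the positive quantity $\abs{x}-\abs{x_\alpha}\wedge\abs{x}$ by a term controlled by un-convergence against the single test vector $u=\abs{x}$.

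First I would record the lattice identity $a-a\wedge b=(a-b)^+$, applied with $a=\abs{x}$ and $b=\abs{x_\alpha}$, to write
\[
  \abs{x}-\abs{x_\alpha}\wedge\abs{x}=\bigl(\abs{x}-\abs{x_\alpha}\bigr)^+ .
\]
Then I would combine the reverse triangle inequality $\bigabs{\abs{x}-\abs{x_\alpha}}\le\abs{x-x_\alpha}$ with the obvious bound $(\abs{x}-\abs{x_\alpha})^+\le\abs{x}$ (which holds because $\abs{x_\alpha}\ge 0$) to obtain
\[
  0\le\abs{x}-\abs{x_\alpha}\wedge\abs{x}\le\abs{x-x_\alpha}\wedge\abs{x}.
\]
By the definition of un-convergence applied with $u=\abs{x}\in X_+$, the right-hand side tends to $0$ in norm; monotonicity of the norm on the positive cone then forces the left-hand side to tend to $0$ as well, so $\abs{x_\alpha}\wedge\abs{x}\to\abs{x}$ in norm.

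For the second assertion I would pass to norms. Since $\abs{x_\alpha}\wedge\abs{x}\to\abs{x}$ in norm and $\bignorm{\abs{x}}=\norm{x}$, continuity of the norm gives $\bignorm{\abs{x_\alpha}\wedge\abs{x}}\to\norm{x}$. On the other hand, $\abs{x_\alpha}\wedge\abs{x}\le\abs{x_\alpha}$ yields $\bignorm{\abs{x_\alpha}\wedge\abs{x}}\le\norm{x_\alpha}$ for every $\alpha$; taking the limit inferior along the net gives $\norm{x}\le\liminf_\alpha\norm{x_\alpha}$.

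The steps are all elementary, so I do not anticipate a serious obstacle; the only point needing care is the choice of the dominating term and the test vector, namely recognizing that the single test $u=\abs{x}$ suffices precisely because $(\abs{x}-\abs{x_\alpha})^+$ is automatically bounded above by $\abs{x}$. One could alternatively invoke Lemma~\ref{basic}\eqref{basic:mod} to reduce at the outset to the case of positive vectors, but the direct domination above is clean enough that this reduction is not essential.
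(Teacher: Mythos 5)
Your proof is correct and is essentially the argument the paper intends: the paper does not spell out a proof but points to Lemma~3.6 of \cite{GaoX:14}, whose proof rests on exactly your domination $\abs{x}-\abs{x_\alpha}\wedge\abs{x}=\bigl(\abs{x}-\abs{x_\alpha}\bigr)^+\le\abs{x_\alpha-x}\wedge\abs{x}$ with the single test vector $u=\abs{x}$, followed by the same passage to norms for the $\liminf$ inequality.
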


Recall that a subset $A$ of $X$ is \term{almost order bounded} if for every $\varepsilon>0$ there exists $u\in X_+$ such that $A\subseteq [-u,u]+\varepsilon B_X$. Equivalently, $\bignorm{\bigl(\abs{x}-u)^+}<\varepsilon$ for all $x\in A$.

\begin{lemma}
  If  $x_\alpha\goesun x$ and $(x_\alpha)$ is almost order bounded then $x_\alpha\to x$.
\end{lemma}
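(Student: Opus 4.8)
The plan is to reduce to the case $x=0$ and then split each $\abs{x_\alpha}$ into an almost order bounded tail that is small in norm and a part on which un-convergence forces the norm to zero. By Lemma~\ref{basic}\eqref{basic:diff} it suffices to show that if $y_\alpha\goesun 0$ and $(y_\alpha)$ is almost order bounded, then $y_\alpha\to 0$. The only point to verify in this reduction is that almost order boundedness is preserved when passing from $(x_\alpha)$ to $(x_\alpha-x)$. Given $\varepsilon>0$ and $u\in X_+$ witnessing almost order boundedness of $(x_\alpha)$, the inequality $\abs{x_\alpha-x}\le\abs{x_\alpha}+\abs{x}$ together with monotonicity of $t\mapsto t^+$ gives $\bigl(\abs{x_\alpha-x}-(u+\abs{x})\bigr)^+\le\bigl(\abs{x_\alpha}-u\bigr)^+$, so $u+\abs{x}$ witnesses almost order boundedness of $(x_\alpha-x)$ for the same $\varepsilon$.

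With $x=0$ in hand, I would fix $\varepsilon>0$ and choose $u\in X_+$ with $\bignorm{(\abs{x_\alpha}-u)^+}<\varepsilon$ for every $\alpha$. The key identity is $\abs{x_\alpha}=\abs{x_\alpha}\wedge u+(\abs{x_\alpha}-u)^+$, which holds because $a-a\wedge u=(a-u)^+$ for $a\in X_+$. Taking norms and using that the lattice norm is monotone on the positive cone yields $\norm{x_\alpha}=\bignorm{\abs{x_\alpha}}\le\bignorm{\abs{x_\alpha}\wedge u}+\bignorm{(\abs{x_\alpha}-u)^+}<\bignorm{\abs{x_\alpha}\wedge u}+\varepsilon$.

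Since $x_\alpha\goesun 0$ means precisely that $\bignorm{\abs{x_\alpha}\wedge u}\to 0$ for every $u\in X_+$, there is an index $\alpha_0$ beyond which $\bignorm{\abs{x_\alpha}\wedge u}<\varepsilon$, and hence $\norm{x_\alpha}<2\varepsilon$ for $\alpha\ge\alpha_0$. As $\varepsilon>0$ was arbitrary, this gives $\norm{x_\alpha}\to 0$, i.e. $x_\alpha\to x$ in the original notation.

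I do not expect a serious obstacle here: the argument is a clean splitting into an $\varepsilon$-tail controlled by almost order boundedness and a remainder controlled by un-convergence. The only steps requiring any care are the transfer of almost order boundedness under the translation by $x$ and the elementary decomposition $a=a\wedge u+(a-u)^+$; once these are recorded, the conclusion is immediate from the definition of un-convergence, and in particular no order continuity hypothesis is needed.
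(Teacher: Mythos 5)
Your proof is correct and takes essentially the same route as the paper's: the paper defers to the arguments of Gao--Xanthos (Lemmas~3.6, 3.7 there), which rest on exactly your decomposition $\abs{x_\alpha-x}=\abs{x_\alpha-x}\wedge u+\bigl(\abs{x_\alpha-x}-u\bigr)^+$, with almost order boundedness controlling the second term and un-convergence (in place of uo-convergence plus order continuity) killing the first. Your explicit verification that almost order boundedness transfers from $(x_\alpha)$ to $(x_\alpha-x)$ is a detail the cited proofs also need and handle the same way, so there is nothing to correct.
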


\begin{proposition}
  If $(x_\alpha)$ is relatively weakly compact and $x_\alpha\goesun x$ then $(x_\alpha)$ converges to $x$ in $\abs{\sigma}(X,X^*)$.
\end{proposition}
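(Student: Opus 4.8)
The plan is to reduce everything, via the AL-representation associated to a positive functional, to a convergence-in-measure statement in an $L_1$-space, and then to invoke the Dunford--Pettis theorem together with Vitali's convergence theorem. First I would use \Cref{basic}\eqref{basic:diff} to assume $x=0$; since translation by a fixed vector is weakly continuous, $(x_\alpha)$ remains relatively weakly compact. Recalling that $x_\alpha\to 0$ in $\abs{\sigma}(X,X^*)$ precisely when $f(\abs{x_\alpha})\to 0$ for every $f\in X^*_+$, it then suffices to fix $f\in X^*_+$ and prove $f(\abs{x_\alpha})\to 0$.

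Next I would set up the representation. The map $\rho(y)=f(\abs{y})$ is a lattice seminorm on $X$ that is additive on $X_+$, so the completion of $X/\ker\rho$ is an abstract $L$-space and hence, by Kakutani's theorem, isometrically lattice isomorphic to some $L_1(\mu)$. Writing $J\colon X\to L_1(\mu)$ for the induced lattice homomorphism, it satisfies $\norm{Jy}_1=f(\abs{y})$, has dense range, and is bounded with $\norm{Jy}_1\le\norm{f}\,\norm{y}$.

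The heart of the argument is to run two transfers through $J$. On one hand, since $J$ is bounded it is weak--weak continuous, so it carries the relatively weakly compact net $(x_\alpha)$ to a relatively weakly compact net $(Jx_\alpha)$ in $L_1(\mu)$; by the Dunford--Pettis theorem this set is uniformly integrable. On the other hand, since $J$ is a lattice homomorphism, for every $u\in X_+$ we have $\norm{\,\abs{Jx_\alpha}\wedge Ju\,}_1=f\bigl(\abs{x_\alpha}\wedge u\bigr)\le\norm{f}\,\bignorm{\abs{x_\alpha}\wedge u}\to 0$ by un-convergence. As $J(X_+)$ is dense in $L_1(\mu)_+$ and $v\mapsto\norm{\,\abs{g}\wedge v\,}_1$ is $1$-Lipschitz, it follows that $\norm{\,\abs{Jx_\alpha}\wedge v\,}_1\to0$ for all $v\in L_1(\mu)_+$; testing against $v=\varepsilon\one_A$ for sets $A$ of finite measure then shows that $(Jx_\alpha)$ converges to $0$ in measure. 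Combining the two transfers, uniform integrability together with convergence in measure yields, by Vitali's theorem, $\norm{Jx_\alpha}_1\to0$, i.e.\ $f(\abs{x_\alpha})\to0$, which is what we want.

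The main obstacle is the careful handling of the (possibly non-finite) representing measure $\mu$: one must check that the relative weak compactness transported to $L_1(\mu)$ supplies not only equi-integrability but also the tightness that Vitali's theorem requires in infinite measure, and that the density/Lipschitz step genuinely upgrades un-nullity to convergence in measure on every set of finite measure. Once these measure-theoretic points are secured, the remainder is bookkeeping.
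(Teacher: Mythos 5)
Your proof is correct, but it takes a genuinely different route from the paper's. Both arguments start identically: reduce to $x=0$ and observe that convergence in $\abs{\sigma}(X,X^*)$ amounts to $f(\abs{x_\alpha})\to 0$ for each $f\in X^*_+$. At that point the paper is essentially done in two lines: it quotes \cite[Theorem~4.37]{Aliprantis:06}, which says that relative weak compactness of $(x_\alpha)$ yields, for the given $f$ and $\varepsilon>0$, a single $u\in X_+$ with $f\bigl(\bigl(\abs{x_\alpha}-u\bigr)^+\bigr)<\varepsilon$ for all $\alpha$; the identity $\abs{x_\alpha}=\abs{x_\alpha}\wedge u+\bigl(\abs{x_\alpha}-u\bigr)^+$ together with $\bignorm{\abs{x_\alpha}\wedge u}\to 0$ then gives $f(\abs{x_\alpha})<2\varepsilon$ eventually. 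You instead build an AL-representation adapted to $f$ (Kakutani applied to the completion of $X/N$, $N=\{y\mid f(\abs{y})=0\}$), push weak compactness through $J$ to get uniform integrability and tightness from the Dunford--Pettis theorem, upgrade un-nullity to local convergence in measure via the density of $J(X_+)$ and the Lipschitz estimate, and close with a Vitali-type argument. All of these steps are sound, and the measure-theoretic points you flag can indeed be secured: the general-measure form of Dunford--Pettis (e.g.\ Dunford--Schwartz, Theorem~IV.8.11) supplies concentration on a set of finite measure as well as equi-integrability, and the Vitali step, being a uniform $\varepsilon$-argument over the index set, is valid for nets and not just sequences. What your approach buys is thematic transparency: it exhibits exactly the link between un-convergence and convergence in measure that the paper develops later (\Cref{un-Lp}, \Cref{AL-un}), and your representation works for arbitrary $X$ and arbitrary $f\in X^*_+$, whereas the AL-representations used in the paper require order continuity and a weak unit. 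What it costs is machinery: Kakutani plus Dunford--Pettis plus Vitali is a heavy substitute for the single citation \cite[Theorem~4.37]{Aliprantis:06}, whose content (uniform smallness of $f$ on the parts of $\abs{x_\alpha}$ sticking out above some fixed $u$) is precisely what your $L_1$-transfer re-derives in this special case.
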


\begin{proof}
  Without loss of generality, $x=0$. Let $f\in X^*_+$. Fix $\varepsilon>0$. By \cite[Theorem 4.37]{Aliprantis:06}, there exists $u\in X_+$ such that
  \begin{math}
    f\Bigl(\bigl(\abs{x_\alpha}-u\bigr)^+\Bigr)<\varepsilon
  \end{math}
  for every $\alpha$. It follows from $\abs{x_\alpha}\wedge u\to 0$ that
  \begin{math}
    f\bigl(\abs{x_\alpha}\wedge u\bigr)\to 0,
  \end{math}
  so that
  \begin{displaymath}
    f\bigl(\abs{x_\alpha}\bigr)=
    f\bigl(\abs{x_\alpha}\wedge u\bigr)
     +f\Bigl(\bigl(\abs{x_\alpha}-u\bigr)^+\Bigr)
    <2\varepsilon
  \end{displaymath}
  for all sufficiently large $\alpha$. It follows that
  \begin{math}
    f\bigl(\abs{x_\alpha}\bigr)\to 0.
  \end{math}
\end{proof}

If $(x_\alpha)$ is a net in a vector lattice with a weak unit $e$ then
$x_\alpha\goesuo 0$ iff $\abs{x_\alpha}\wedge e\goeso 0$; see, e.g.,
\cite[Lemma~3.5]{GTX}. Analogously, the next result limits the task of
checking un-convergence to a single quasi-interior point, if one
exists; cf \cite[Lemma 24]{Troitsky:04}.

\begin{lemma}\label{un-qip}
Let $X$ be a Banach lattice with a quasi-interior point $e$. Then $x_\alpha \goesun 0$ iff $\abs{x_\alpha}\wedge e\to 0$.
\end{lemma}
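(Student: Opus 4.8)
The forward implication is immediate and requires no hypothesis on $e$ beyond positivity: applying the definition of un-convergence with $u=e$ gives $\abs{x_\alpha}\wedge e\to 0$ directly. The whole content of the lemma is therefore the converse, which is what I would focus on. First I would reduce to the positive case: by \Cref{basic}\eqref{basic:mod} (equivalently, the Remark following it) we have $x_\alpha\goesun 0$ iff $\abs{x_\alpha}\goesun 0$, and the hypothesis is already stated in terms of $\abs{x_\alpha}$, so I may replace $x_\alpha$ by $\abs{x_\alpha}$ and assume $x_\alpha\ge 0$. The task then becomes: assuming $x_\alpha\wedge e\to 0$, show $x_\alpha\wedge u\to 0$ for every $u\in X_+$.

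The key input is the defining property of a quasi-interior point $e$: because the ideal generated by $e$ is norm dense, for each $u\in X_+$ one has $u\wedge ne\to u$ in norm, i.e.\ $\bignorm{(u-ne)^+}\to 0$ as $n\to\infty$. So I would fix $u\in X_+$ and $\varepsilon>0$, choose $n$ with $\bignorm{(u-ne)^+}<\varepsilon$, and then estimate. From $u\le ne+(u-ne)^+$ together with monotonicity of $\wedge$ and the subadditivity inequality $(a+b)\wedge w\le a\wedge w+b\wedge w$ (applied with $w=x_\alpha$), I obtain
\begin{displaymath}
  x_\alpha\wedge u\le x_\alpha\wedge ne + x_\alpha\wedge(u-ne)^+
  \le n\,(x_\alpha\wedge e)+(u-ne)^+,
\end{displaymath}
where the last step uses the scaling bound $x_\alpha\wedge ne\le n\,(x_\alpha\wedge e)$, valid for $x_\alpha\ge 0$ and $n\ge 1$ since $ne\wedge x_\alpha\le ne\wedge nx_\alpha=n(e\wedge x_\alpha)$, and the trivial bound $x_\alpha\wedge(u-ne)^+\le(u-ne)^+$.

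Taking norms and using monotonicity of the norm on $X_+$ gives
\begin{displaymath}
  \norm{x_\alpha\wedge u}\le n\norm{x_\alpha\wedge e}+\bignorm{(u-ne)^+}
  < n\norm{x_\alpha\wedge e}+\varepsilon .
\end{displaymath}
Since $n$ is now fixed, the hypothesis $\norm{x_\alpha\wedge e}\to 0$ forces $n\norm{x_\alpha\wedge e}<\varepsilon$ for all large $\alpha$, whence $\norm{x_\alpha\wedge u}<2\varepsilon$ eventually. As $\varepsilon>0$ was arbitrary, $x_\alpha\wedge u\to 0$, completing the converse.

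I do not expect a serious obstacle: the argument is a single $\varepsilon$-estimate. The only genuinely load-bearing fact is the quasi-interior characterization $u\wedge ne\to u$ in norm, which is precisely where quasi-interiority (rather than $e$ merely being a weak unit) is used — in a non-order-continuous lattice a weak unit need not have this approximation property, so this is the step that cannot be weakened. The remaining ingredients (the stated subadditivity inequality and the harmless scaling $ne\wedge x_\alpha\le n(e\wedge x_\alpha)$, which lets the fixed factor $n$ be pulled outside the norm so that it is absorbed by $\norm{x_\alpha\wedge e}\to 0$) are routine.
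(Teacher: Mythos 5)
Your proof is correct and is essentially the paper's own argument: both fix $u$ and $\varepsilon$, use the quasi-interior property to choose $n$ with $\norm{u-u\wedge ne}<\varepsilon$, split $\abs{x_\alpha}\wedge u$ via subadditivity of $\wedge$, pull out the factor $n$ through $\abs{x_\alpha}\wedge ne\le n(\abs{x_\alpha}\wedge e)$, and conclude with the same $2\varepsilon$ estimate. The only cosmetic difference is that you write the error term as $(u-ne)^+$ while the paper writes $u-u\wedge ne$, which are the same vector.
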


\begin{proof}									
The forward implication is immediate. For the reverse implication, let $u\in X_+$ be arbitrary and fix $\varepsilon>0$. Note that
\begin{displaymath}
  \abs{x_\alpha}\wedge u
  \le\abs{x_\alpha}\wedge(u-u\wedge me)+\abs{x_\alpha}\wedge(u\wedge me)
  \le(u-u\wedge me)+m\bigl(\abs{x_\alpha}\wedge e\bigr)
\end{displaymath}
and, therefore,
\begin{displaymath}
  \bignorm{\abs{x_\alpha}\wedge u}
  \le\norm{u-u\wedge me}+m\bignorm{\abs{x_\alpha}\wedge e}
\end{displaymath}
for all $\alpha$ and all $m\in\mathbb N$. Since $e$ is quasi-interior, we can find $m$ such that $\norm{u-u\wedge me}<\varepsilon$. Furthermore, it follows from $\abs{x_\alpha}\wedge e\to 0$ that there exists $\alpha_0$ such that $\bignorm{\abs{x_\alpha}\wedge e}<\frac{\varepsilon}{m}$ whenever $\alpha\ge\alpha_0$. It follows that
\begin{math}
  \bignorm{\abs{x_\alpha}\wedge u}<\varepsilon+m\frac{\varepsilon}{m}=2\varepsilon.
\end{math}
Therefore, $\abs{x_\alpha}\wedge u\to 0$.
\end{proof}

\begin{corollary} \label{un-wu}
Let $X$ be an order continuous Banach lattice with a weak unit $e$. Then $x_\alpha \goesun 0$ iff $\abs{x_\alpha}\wedge e\to 0$. 
\end{corollary}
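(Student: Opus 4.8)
The plan is to reduce the statement to \Cref{un-qip} by verifying that, under the present hypotheses, the weak unit $e$ is in fact a quasi-interior point; the asserted equivalence then follows verbatim from that lemma. So the entire task is to prove that in an order continuous Banach lattice every weak unit is quasi-interior.

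To this end I would first recall that an order continuous Banach lattice is Dedekind complete, so that for a fixed $u\in X_+$ the increasing net $(u\wedge ne)_{n\in\mathbb N}$, which is bounded above by $u$, has a supremum $w:=\sup_n(u\wedge ne)$ with $0\le w\le u$. The heart of the argument is to show $w=u$. Setting $v:=u-w\ge 0$, I would establish that $v$ is disjoint from $e$. The key inequality is
\begin{displaymath}
  (u\wedge ne)+(v\wedge e)\le u\wedge(n+1)e\le w\qquad\text{for every }n,
\end{displaymath}
which holds because the left-hand side is at most $u$ (using $v\wedge e\le v=u-w\le u-u\wedge ne$) and at most $(n+1)e$ (using $v\wedge e\le e$), hence at most their infimum $u\wedge(n+1)e$, which in turn is at most $w$. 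Taking the supremum over $n$ and using that $v\wedge e$ is constant gives $w+(v\wedge e)\le w$, hence $v\wedge e=0$. Since $e$ is a weak unit, this forces $v=0$; that is, $u\wedge ne\uparrow u$ in order.

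Finally, order continuity of the norm turns this order convergence into norm convergence: from $u-u\wedge ne\downarrow 0$ we obtain $\norm{u-u\wedge ne}\to 0$. As each $u\wedge ne$ lies in the principal ideal generated by $e$, the vector $u$ lies in its norm closure; since $u\in X_+$ was arbitrary and $X=X_+-X_+$, the ideal generated by $e$ is dense, i.e.\ $e$ is quasi-interior. An appeal to \Cref{un-qip} then completes the proof. The main obstacle is the disjointness step identifying $\sup_n(u\wedge ne)$ with $u$; the remainder is routine. (This fact---that weak units are quasi-interior in order continuous Banach lattices---is standard and could alternatively simply be cited.)
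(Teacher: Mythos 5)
Your proposal is correct and follows exactly the paper's route: the paper's proof is the one-line observation that in an order continuous Banach lattice a weak unit is the same thing as a quasi-interior point, followed by an appeal to \Cref{un-qip}. You do precisely this reduction, the only difference being that you supply a complete (and correct) proof of that standard fact---via $\sup_n(u\wedge ne)=u$ and order continuity---where the paper simply cites it.
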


\begin{proof}
If $X$ is order continuous, then $e$ is a weak unit iff $e$ is a quasi-interior point.
\end{proof}

Recall that norm convergence is sequential in nature. In particular, given a net $(x_\alpha)$ in a normed space, if $x_\alpha\to x$ then there exists an increasing sequence of indices $(\alpha_n)$ such that $x_{\alpha_n}\to x$. This often allows one to reduce nets to sequences when dealing with norm convergence. In view of \Cref{un-qip}, we can do the same with the un-convergence as long as the space has a quasi-interior point (in particular, when $X$ is separable):

\begin{corollary}\label{qui-seq}
  Suppose that $X$ has a quasi-interior point and $x_\alpha\goesun 0$ for some net $(x_\alpha)$ in $X$. Then there exists an increasing sequence of indices $(\alpha_n)$ such that $x_{\alpha_n}\goesun 0$.
\end{corollary}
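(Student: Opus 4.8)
The plan is to convert the un-convergence hypothesis into ordinary norm convergence through the quasi-interior point, run the standard sequential extraction for norm-convergent nets, and then translate back using the same lemma.

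First I would fix a quasi-interior point $e$ of $X$ and set $y_\alpha:=\abs{x_\alpha}\wedge e$. By \Cref{un-qip}, the assumption $x_\alpha\goesun 0$ is exactly the statement that $y_\alpha\to 0$ in norm. So the whole problem is now about an honest norm-null net $(y_\alpha)$ in the Banach space $X$.

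Next I would invoke the sequential nature of norm convergence recalled just before the corollary. Concretely, since $\norm{y_\alpha}\to 0$, for each $n$ there is an index $\beta_n$ with $\norm{y_\alpha}<\tfrac1n$ whenever $\alpha\ge\beta_n$. Using that the index set is directed, I choose indices inductively so that $\alpha_n\ge\alpha_{n-1}$ and $\alpha_n\ge\beta_n$; this produces an \emph{increasing} sequence of indices $(\alpha_n)$ with $\norm{y_{\alpha_n}}<\tfrac1n$, whence $\abs{x_{\alpha_n}}\wedge e=y_{\alpha_n}\to 0$ in norm.

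Finally I would apply \Cref{un-qip} a second time, now to the sequence $(x_{\alpha_n})$ — which is just a net indexed by $\mathbb N$ — to conclude from $\abs{x_{\alpha_n}}\wedge e\to 0$ that $x_{\alpha_n}\goesun 0$, as desired. I expect no serious obstacle: the only point requiring care is that the extraction yields genuinely increasing indices, and this is guaranteed by directedness of the index set. All the substantive content has already been packaged into \Cref{un-qip}, so the corollary is essentially a two-line reduction to the classical sequential property of norm convergence.
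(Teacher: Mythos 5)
Your proof is correct and is exactly the argument the paper intends: the corollary is stated right after the observation that norm convergence admits increasing-index sequential extraction, and your two applications of \Cref{un-qip} (reduce to $\abs{x_\alpha}\wedge e\to 0$, extract, then lift back) are precisely how the paper derives it. No gaps; the care you take with directedness to get genuinely increasing indices is the only point needing attention, and you handle it correctly.
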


\begin{question}\label{un-seq}
  Does \Cref{qui-seq} remain valid without a quasi-interior point?
\end{question}

We will show in \Cref{un-oc-seq} that the answer is affirmative for order continuous spaces.

\section{Disjoint subsequences}


The following lemma is standard; we provide the proof for the convenience of the reader.

\begin{lemma}\label{GRDP}
  Let $\abs{x}=u+v$ for some vector $x$ and some positive vectors $u$ and $v$ in a vector lattice. Then there exist $y$ and $z$ such that $x=y+z$, $\abs{y}=u$, and $\abs{z}=v$.
\end{lemma}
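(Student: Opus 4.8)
The plan is to reduce the problem to the positive and negative parts of $x$ and then apply the Riesz decomposition property of vector lattices. Writing $x=x^+-x^-$, the hypothesis becomes
\begin{math}
  x^++x^-=\abs{x}=u+v,
\end{math}
and recall that $x^+$ and $x^-$ are disjoint, i.e., $x^+\wedge x^-=0$. I would then peel off from $x^+$ and $x^-$ exactly the portions that should constitute the ``$u$-part'' of $x$.

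Concretely, since $0\le u\le x^++x^-$, the Riesz decomposition property yields $a,b\in X_+$ with $u=a+b$, $a\le x^+$, and $b\le x^-$. I set $y=a-b$ and $z=x-y=(x^+-a)-(x^--b)$, and write $c=x^+-a\ge 0$ and $d=x^--b\ge 0$. By construction $y+z=x$, so it remains only to compute the two moduli.

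The key observation is disjointness. From $a\le x^+$ and $b\le x^-$ we get $0\le a\wedge b\le x^+\wedge x^-=0$, so $a$ and $b$ are disjoint; hence $\abs{y}=\abs{a-b}=a+b=u$. Similarly $c\le x^+$ and $d\le x^-$ give $0\le c\wedge d\le x^+\wedge x^-=0$, so $c$ and $d$ are disjoint and
\begin{displaymath}
  \abs{z}=\abs{c-d}=c+d=(x^++x^-)-(a+b)=\abs{x}-u=v.
\end{displaymath}
This completes the argument. There is essentially no obstacle beyond correctly invoking the Riesz decomposition property (valid in any vector lattice) and verifying that the two pairs are disjoint; the elementary identity $\abs{p-q}=p+q$ for disjoint $p,q\in X_+$ then finishes each computation.
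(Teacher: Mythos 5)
Your proof is correct and follows essentially the same route as the paper: both apply the Riesz decomposition property to split $u$ (and hence $v$) along $x^+$ and $x^-$, set $y=a-b$, and use $a\le x^+$, $b\le x^-$, $x^+\wedge x^-=0$ to get disjointness and compute the moduli. The only cosmetic difference is that the paper invokes the ``matrix'' form of the decomposition (getting all four pieces $a,b,c,d$ at once from $x^++x^-=u+v$), whereas you use the domination form $0\le u\le x^++x^-$ and recover $c,d$ by subtraction; the two are equivalent.
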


\begin{proof}
  Applying the Riesz Decomposition Property \cite[Theorem~1.20]{Aliprantis:06} to the equality $x^++x^-=u+v$, we find four positive vectors vectors $a$, $b$, $c$, and $d$ such that $u=a+b$, $v=c+d$, $x^+=a+c$, and $x^-=b+d$. Put $y=a-b$ and $z=c-d$. Then $y+z=x^+-x^-=x$.  It follows from $0\le a\le x^+$ and $0\le b\le x^-$ that $a\perp b$ and, therefore, $\abs{y}=\abs{a-b}=a+b=u$. Similarly, $c\perp d$, and, therefore, $\abs{z}=v$.
\end{proof}

\begin{theorem}\label{KP}
  Let $(x_\alpha)$ be a net in $X$ such that $x_\alpha\goesun 0$. Then there exists an increasing sequence of indices $(\alpha_k)$ and a disjoint sequence $(d_k)$ such that $x_{\alpha_k}-d_k\to 0$.
\end{theorem}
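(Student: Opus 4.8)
The plan is to reduce to a positive net, build a disjoint sequence of positive vectors approximating it from below, and then restore the signs using \Cref{GRDP}. Since $x_\alpha\goesun 0$ means precisely that $\abs{x_\alpha}\wedge u\to 0$ for every $u\in X_+$, the positive net $y_\alpha:=\abs{x_\alpha}$ itself satisfies $y_\alpha\goesun 0$. I will produce an increasing sequence of indices $(\alpha_k)$ together with disjoint vectors $0\le d_k\le y_{\alpha_k}$ such that $\bignorm{y_{\alpha_k}-d_k}\to 0$. Granting this, \Cref{GRDP} applied to the decomposition $\abs{x_{\alpha_k}}=y_{\alpha_k}=d_k+(y_{\alpha_k}-d_k)$ yields $x_{\alpha_k}=\hat d_k+r_k$ with $\abs{\hat d_k}=d_k$ and $\abs{r_k}=y_{\alpha_k}-d_k$. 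The sequence $(\hat d_k)$ is then disjoint, since disjointness depends only on moduli, and $\bignorm{x_{\alpha_k}-\hat d_k}=\bignorm{r_k}=\bignorm{y_{\alpha_k}-d_k}\to 0$, which is exactly what is wanted.

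To select the indices I will run a gliding-hump argument driven by un-convergence. Choosing $\alpha_1$ arbitrarily and, having chosen $\alpha_1<\dots<\alpha_{k-1}$, setting $u=y_{\alpha_1}\vee\dots\vee y_{\alpha_{k-1}}$, I use $y_\alpha\wedge u\to 0$ to pick $\alpha_k>\alpha_{k-1}$ with $\bignorm{y_{\alpha_k}\wedge u}<2^{-k}$. Since $y_{\alpha_k}\wedge y_{\alpha_j}\le y_{\alpha_k}\wedge u$ for each $j<k$, the overlaps decay fast: $\bignorm{y_{\alpha_k}\wedge y_{\alpha_j}}<2^{-\max(j,k)}$ for all $j\ne k$. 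Consequently the row sums $R_k:=\sum_{j\ne k}\bignorm{y_{\alpha_k}\wedge y_{\alpha_j}}$ are finite and tend to $0$.

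The heart of the matter is the disjointification formula. With $g_k:=\sum_{j\ne k}\bigl(y_{\alpha_k}\wedge y_{\alpha_j}\bigr)$---a series that converges in norm because $X$ is complete and $\sum_{j\ne k}\bignorm{y_{\alpha_k}\wedge y_{\alpha_j}}=R_k<\infty$---I set $d_k:=(y_{\alpha_k}-g_k)^+$. Then $0\le d_k\le y_{\alpha_k}$ and $\bignorm{y_{\alpha_k}-d_k}=\bignorm{y_{\alpha_k}\wedge g_k}\le R_k\to 0$, which supplies the approximation. The subtle point---and the step I expect to be the main obstacle, since a general Banach lattice carries no band projections with which to extract genuinely disjoint components---is that $(d_k)$ is \emph{exactly} disjoint rather than merely almost disjoint. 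This is forced by subtracting precisely the accumulated overlaps: for $j\ne k$ we have $g_k\ge y_{\alpha_k}\wedge y_{\alpha_j}$, so $y_{\alpha_k}-g_k\le y_{\alpha_k}-y_{\alpha_k}\wedge y_{\alpha_j}=(y_{\alpha_k}-y_{\alpha_j})^+$, and hence $d_k\le(y_{\alpha_k}-y_{\alpha_j})^+$. Symmetrically $d_j\le(y_{\alpha_j}-y_{\alpha_k})^+$; as these dominating vectors are the positive and negative parts of $y_{\alpha_k}-y_{\alpha_j}$, they are disjoint, and therefore $d_k\wedge d_j=0$. Feeding $(d_k)$ into the lifting scheme of the first paragraph completes the proof.
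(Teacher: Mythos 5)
Your proof is correct and follows essentially the same route as the paper: the same gliding-hump selection of indices making the pairwise overlaps norm-summable, the same disjointification $d_k=\bigl(\abs{x_{\alpha_k}}-\sum_{j\ne k}\abs{x_{\alpha_k}}\wedge\abs{x_{\alpha_j}}\bigr)^+$ with the identical disjointness argument, and the same restoration of signs via \Cref{GRDP}. The only cosmetic difference is that you enforce the overlap bounds through a single vector $u=\bigvee_{j<k}\abs{x_{\alpha_j}}$, whereas the paper uses term-by-term estimates $\bignorm{x_{\alpha_k}\wedge x_{\alpha_i}}\le 2^{-(k+i)}$.
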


\begin{proof}
  Assume first that $x_\alpha\ge 0$ for every $\alpha$. Pick any $\alpha_1$. Suppose that $\alpha_1,\dots,\alpha_{k-1}$ have been constructed. Note that $x_\alpha\wedge x_{\alpha_i}\to 0$ for every $i=1,\dots,k-1$. Choose $\alpha_k>\alpha_{k-1}$ so that 
  \begin{math}
    \bignorm{x_{\alpha_k}\wedge x_{\alpha_i}}\le\frac{1}{2^{k+i}}
  \end{math}
  for every $i=1,\dots,k-1$. This produces an increasing sequence of indices $(\alpha_k)$ such that $\norm{z_{ik}}\le\frac{1}{2^{k+i}}$ where $z_{ik}=x_{\alpha_i}\wedge x_{\alpha_k}$, $1\le i<k$.

For every $k$, put $v_k=\sum_{i=1}^{k-1}z_{ik}+\sum_{j=k+1}^\infty z_{kj}$. Clearly, $v_k$ is defined and $\norm{v_k}<\frac{1}{2^k}$.
Put $d_k=(x_{\alpha_k}-v_k)^+$. It is easy to see that $0\le x_{\alpha_k}-d_k\le v_k$, so that $\norm{x_{\alpha_k}-d_k}\to 0$ as $k\to\infty$. It is left to show that the sequence $(d_k)$ is disjoint. Let $k<m$. Then
  \begin{eqnarray*}
    d_k&=&(x_{\alpha_k}-v_k)^+\le(x_{\alpha_k}-z_{km})^+
      =x_{\alpha_k}-x_{\alpha_k}\wedge x_{\alpha_m},\mbox{ and }\\
    d_m&=&(x_{\alpha_m}-v_m)^+\le(x_{\alpha_m}-z_{km})^+
      =x_{\alpha_m}-x_{\alpha_k}\wedge x_{\alpha_m}.
  \end{eqnarray*}
  It follows that $d_k\perp d_m$.

  For the general case, we first apply the first part of the proof to
  the net $\bigl(\abs{x_\alpha}\bigr)$ and produce an increasing
  sequence of indices $(\alpha_k)$ and two positive sequences $(w_k)$
  and $(h_k)$ such that $\abs{x_{\alpha_k}}=w_k+h_k$, $(w_k)$ is
  disjoint, and $h_k\to 0$. By \Cref{GRDP}, we can find sequences
  $(d_k)$ and $(g_k)$ in $X$ with $\abs{d_k} = w_k$, $\abs{g_k} = h_k$
  and $x_{\alpha_k} = d_k + g_k$. It follows that $(d_k)$ is a
  disjoint sequence and $g_k\to 0$. Thus,
  $x_{\alpha_k}-d_k\to 0$.
\end{proof}

\begin{remark}
  \Cref{KP} is a variant of the Kade\v c-Pe{\l}\-czy{\'n}\-ski dichotomy theorem; cf \cite[p.38]{Lindenstrauss:79}. \Cref{KP} clearly implies \cite[Lemma~6.7]{GTX}; unlike in \cite[Lemma~6.7]{GTX}, we do not require the space to be order continuous or the net be norm bounded. Also, we start with a net instead of a sequence. 
\end{remark}

Recall the following standard fact; see, e.g., Exercise~13 in \cite[p.~25]{Abramovich:02}.

\begin{proposition}\label{norm-o-subseq}
  Every norm convergent sequence in a Banach lattice has a subsequence which converges in order to the same limit. 
\end{proposition}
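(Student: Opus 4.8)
The plan is to reduce to the case $x=0$ by linearity, extract a rapidly norm-convergent subsequence, and then dominate its terms by a sequence decreasing to $0$ in order, built as the tails of a norm-convergent series of absolute values.

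First I would replace $(x_n)$ by $(x_n-x)$, so that it suffices to show: if $x_n\to 0$ in norm, then some subsequence is order null. Passing to a subsequence and relabelling, I may assume $\norm{x_n}\le 2^{-n}$ for every $n$. I then consider the increasing sequence of partial sums $s_m=\sum_{k=1}^m\abs{x_k}$. Since $\norm{s_m-s_l}\le\sum_{k=l+1}^m\norm{x_k}\le 2^{-l}$ for $l<m$, the sequence $(s_m)$ is norm Cauchy, hence norm convergent to some $z\in X$ by completeness of the Banach lattice.

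The key step is to upgrade this to an order statement, namely $s_m\uparrow z$. Because the positive cone of $X$ is norm closed, the inequalities $s_l\le s_m$ (for $l\le m$) pass to the norm limit in $m$ to give $s_l\le z$ for all $l$; and if $w$ is any upper bound of $(s_m)$, the same closedness gives $z\le w$. Thus $z=\sup_m s_m$, i.e.\ $s_m\uparrow z$. Setting $y_k=z-s_{k-1}$ (with $s_0=0$), we obtain $y_k\downarrow 0$, while $\abs{x_k}=s_k-s_{k-1}\le z-s_{k-1}=y_k$. Consequently $\abs{x_m}\le y_m\le y_k$ whenever $m\ge k$, which is exactly the condition in the definition of order convergence for the decreasing sequence $(y_k)$; hence $x_k\goeso 0$, completing the argument.

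The main obstacle is the passage recorded in the third paragraph: turning the norm limit $z$ of the increasing sequence $(s_m)$ into its order supremum. This is the only place where the Banach-lattice hypothesis is genuinely used, via norm-closedness of the positive cone, and it is what makes the dominating sequence $(y_k)$ decrease to $0$ in order rather than merely in norm.
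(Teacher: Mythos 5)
Your proof is correct and complete: the extraction of a subsequence with $\norm{x_n}\le 2^{-n}$, the norm-convergent partial sums $s_m=\sum_{k=1}^m\abs{x_k}$, the upgrade $s_m\uparrow z$ via norm-closedness of the positive cone, and the dominating sequence $y_k=z-s_{k-1}\downarrow 0$ all check out against the paper's definition of order convergence. The paper itself gives no proof of this proposition --- it simply cites it as a standard fact (Exercise~13 in \cite[p.~25]{Abramovich:02}) --- and your argument is precisely the standard one behind that exercise, so there is nothing to compare beyond noting that you have supplied the details the paper omits.
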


\begin{corollary}\label{un-oc-seq}
  Let $(x_\alpha)$ be a net in an order continuous Banach lattice $X$ such that $x_\alpha\goesun 0$. Then there exists an increasing sequence of indices $(\alpha_k)$ a such that $x_{\alpha_k}\goesuo 0$ and $x_{\alpha_k}\goesun 0$.
\end{corollary}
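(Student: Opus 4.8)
The plan is to combine the Kade\v c--Pe\l czy\'nski-type dichotomy (\Cref{KP}) with the extraction of an order-convergent subsequence (\Cref{norm-o-subseq}), exploiting order continuity to pass between un-convergence, uo-convergence, and disjointness. First I would apply \Cref{KP} to the un-null net $(x_\alpha)$ to obtain an increasing sequence of indices $(\alpha_k)$ and a disjoint sequence $(d_k)$ with $x_{\alpha_k}-d_k\to 0$. Since $X$ is order continuous, every disjoint sequence is uo-null (by \cite[Corollary~3.6]{GTX}, cited in \Cref{disj-ellinfty}), so $d_k\goesuo 0$; by \Cref{uo-oc-un}, order continuity also gives $d_k\goesun 0$.

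Next I would upgrade the norm-null difference $(x_{\alpha_k}-d_k)$ to something order-theoretically usable. Applying \Cref{norm-o-subseq} to the norm-null sequence $(x_{\alpha_k}-d_k)$ yields a further subsequence, along which $x_{\alpha_k}-d_k\goeso 0$; after relabelling (passing to this sub-subsequence of indices, still increasing) we may assume $x_{\alpha_k}-d_k\goeso 0$, hence also $x_{\alpha_k}-d_k\goesuo 0$ and, since $X$ is order continuous and order convergence of a sequence gives norm convergence, $x_{\alpha_k}-d_k\goesun 0$ as well (the latter is immediate from \Cref{norm-un} since the difference is norm-null). The two conclusions then follow by adding: writing $x_{\alpha_k}=d_k+(x_{\alpha_k}-d_k)$, both summands are uo-null, so by the linearity of uo-convergence $x_{\alpha_k}\goesuo 0$; similarly both summands are un-null, so by \Cref{basic}\eqref{basic:alg} $x_{\alpha_k}\goesun 0$.

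The main obstacle is the logical bookkeeping of the two successive subsequence extractions: \Cref{KP} already selects a subsequence of indices, and \Cref{norm-o-subseq} selects a further subsequence of that, so I must be careful that the final indexing is an increasing sequence extracted from the original net and that the disjoint sequence $(d_k)$ is re-indexed compatibly (a subsequence of a disjoint sequence is still disjoint, so this causes no trouble). One should also note that order convergence of a sequence is automatically uo-convergence, so no separate argument is needed there, and that in an order continuous space order-null sequences are norm-null, which is the only place order continuity beyond \Cref{uo-oc-un} is used. Everything else is routine application of the lemmas already established.
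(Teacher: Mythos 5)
Your proposal is correct and follows essentially the same route as the paper's proof: apply \Cref{KP}, use disjointness plus order continuity to get $d_k\goesuo 0$ and $d_k\goesun 0$, pass to a further subsequence via \Cref{norm-o-subseq} so that $x_{\alpha_k}-d_k\goeso 0$, and add the two pieces. The only (immaterial) difference is bookkeeping order — the paper establishes un-convergence along the original KP indices before extracting the further subsequence for uo-convergence, while you extract first and then verify both — and your attention to the re-indexing of $(d_k)$ is exactly the right point to check.
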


\begin{proof}
  Let $(\alpha_k)$ and $(d_k)$ be as in \Cref{KP}. Since $(d_k)$ is disjoint, we have $d_k\goesuo 0$ and, therefore, $d_k\goesun 0$. It now follows from $x_{\alpha_k}-d_k\to 0$ that $x_{\alpha_k}-d_k\goesun 0$ and, therefore,  $x_{\alpha_k}\goesun 0$. Furthermore, since $x_{\alpha_k}-d_k\to 0$, passing to a further subsequence, we may assume that $x_{\alpha_k}-d_k\goeso 0$ and, therefore, $x_{\alpha_k}-d_k\goesuo 0$. This yields $x_{\alpha_k}\goesuo 0$.
\end{proof}

Note that \Cref{un-oc-seq} provides a partial answer to \Cref{un-seq}.

\section{Uo-convergent subsequences and convergence in measure}

We now have an analogue of \Cref{norm-o-subseq} for un- and uo-convergences. 

\begin{proposition} \label{uo-subseq}
If $x_n \goesun 0$ then there is a subsequence $(x_{n_k})$ of $(x_n)$ such that $x_{n_k} \goesuo 0$.
\end{proposition}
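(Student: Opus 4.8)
The plan is to invoke the Kade\v c--Pe{\l}czy{\'n}ski-type dichotomy of \Cref{KP} and then upgrade the resulting norm estimate to an order estimate. Since a sequence is in particular a net, I would apply \Cref{KP} directly to $(x_n)$: because $x_n\goesun 0$, there is an increasing sequence of indices $(n_k)$ and a disjoint sequence $(d_k)$ with $x_{n_k}-d_k\to 0$ in norm. The subsequence $(x_{n_k})$ will be the one we are after, and it remains only to verify that $x_{n_k}\goesuo 0$.

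The key is the decomposition $x_{n_k}=d_k+(x_{n_k}-d_k)$, and I would treat the two summands separately. For the first, since $(d_k)$ is disjoint, it is uo-null by \cite[Corollary~3.6]{GTX}; crucially, this holds in any vector lattice with no order continuity assumption, which is exactly why the present statement needs none. For the second, $x_{n_k}-d_k\to 0$ in norm, so by \Cref{norm-o-subseq} I can pass to a further subsequence along which $x_{n_k}-d_k\goeso 0$; order convergence implies uo-convergence, so $x_{n_k}-d_k\goesuo 0$ along this further subsequence. Adding the two uo-null sequences, using the linearity of uo-convergence (which follows from $\abs{a+b}\wedge u\le\abs{a}\wedge u+\abs{b}\wedge u$), yields $x_{n_k}\goesuo 0$.

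There is no serious obstacle here, as \Cref{KP} carries the analytic weight; the only point requiring care is the passage to a further subsequence needed to convert norm convergence of $x_{n_k}-d_k$ into order, and hence uo, convergence. After that step both constituents of the decomposition are uo-null and so is their sum. This is precisely the uo-half of the argument in \Cref{un-oc-seq}, isolated so as to dispense with the order continuity that was required there only to obtain the companion un-convergence conclusion.
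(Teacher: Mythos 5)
Your proof is correct, but it follows a genuinely different route from the paper's. The paper proves \Cref{uo-subseq} without any disjointification: it forms $e:=\sum_{n=1}^\infty\frac{\abs{x_n}}{2^n\norm{x_n}}$, passes to the band $B_e$ generated by $e$ (in which $e$ is a weak unit), uses $\bignorm{\abs{x_n}\wedge e}\to 0$ together with \Cref{norm-o-subseq} to extract a subsequence with $\abs{x_{n_k}}\wedge e\goeso 0$ in $B_e$, concludes $x_{n_k}\goesuo 0$ in $B_e$ by the weak-unit criterion for uo-convergence \cite[Lemma~3.5]{GTX}, and finally transfers the conclusion to $X$ via \cite[Corollary~3.8]{GTX}, since $B_e$ is an ideal. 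Your argument instead reuses \Cref{KP}: the disjoint approximants $d_k$ are uo-null by \cite[Corollary~3.6]{GTX} (valid in any vector lattice, which is indeed why no order continuity is needed), the error terms $x_{n_k}-d_k$ become uo-null after one further subsequence extraction via \Cref{norm-o-subseq}, and linearity of uo-convergence finishes; every step checks out, including the bookkeeping (a sequence is a net, increasing indices yield a genuine subsequence, subsequences of disjoint sequences remain disjoint, and uo-convergence passes to subsequences). As you observe, this is exactly the uo-half of the proof of \Cref{un-oc-seq}, and there is no circularity since \Cref{KP} does not depend on \Cref{uo-subseq}. What each approach buys: yours is shorter given that \Cref{KP} is already available, makes the dispensability of order continuity transparent, and incidentally avoids the paper's implicit assumption $\norm{x_n}\neq 0$ in the definition of $e$ (a triviality, since zero terms can be discarded); the paper's proof is independent of the Section~3 disjointification machinery and rests instead on the band/weak-unit reduction, which mirrors the classical measure-theoretic trick of constructing a single dominating function and is the device reused elsewhere in the paper (e.g., in \Cref{un-qip} and the AL-representation results).
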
   

\begin{proof}
Define $e := \sum_{n=1}^{\infty} \frac{\abs{x_n}}{2^n \norm{x_n}}$. Let $B_e$ be the band generated by $e$ in $X$. It follows from $x_n\goesun 0$ that $\abs{x_n}\wedge e\to 0$ in $X$ and, therefore, in $B_e$. There exists a subsequence $(x_{n_k})$ of $(x_n)$ such that $\abs{x_{n_k}}\wedge e\goeso 0$ in $B_e$. Since $e$ is a weak unit in $B_e$, we have $x_{n_k}\goesuo 0$ in $B_e$. Finally, since $B_e$ is an ideal in $X$, it follows from \cite[Corollary~3.8]{GTX} that 
$x_{n_k} \goesuo 0$ in $X$. 
\end{proof}

It was observed in \cite[Example~23]{Troitsky:04} that for sequences in $L_p(\mu)$, where $\mu$ is a finite measure, un-convergence agrees with convergence in measure. We now provide an alternative proof of this fact.

\begin{corollary}(\cite{Troitsky:04}) \label{un-Lp}
Let $(f_n)$ be a sequence in $L_{p}(\mu)$ where $1\le p<\infty$ and $\mu$ is a finite measure. Then $f_n \goesun 0$ iff $f_n \goesmu 0$.
\end{corollary}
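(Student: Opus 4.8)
The plan is to prove both implications of the equivalence $f_n \goesun 0 \iff f_n \goesmu 0$ directly, exploiting the fact that $L_p(\mu)$ with a finite measure is an order continuous Banach lattice possessing a quasi-interior point, namely the constant function $\one$.

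First I would handle the forward implication, $f_n \goesun 0 \implies f_n \goesmu 0$. By \Cref{un-qip} applied with the quasi-interior point $e = \one$, un-convergence to $0$ is equivalent to $\bignorm{\abs{f_n}\wedge\one}\to 0$ in $L_p(\mu)$. The key observation is that the $L_p$-norm of $\abs{f_n}\wedge\one$ controls the measure of the set where $\abs{f_n}$ is large: for any $\varepsilon\in(0,1)$, on the set $\{\abs{f_n}\ge\varepsilon\}$ the function $\abs{f_n}\wedge\one$ is at least $\varepsilon$, so
\begin{displaymath}
  \bignorm{\abs{f_n}\wedge\one}_p^p \ge \varepsilon^p\,\mu\bigl(\{\abs{f_n}\ge\varepsilon\}\bigr).
\end{displaymath}
Hence $\mu\bigl(\{\abs{f_n}\ge\varepsilon\}\bigr)\to 0$, which is exactly convergence in measure.

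For the reverse implication, $f_n\goesmu 0 \implies f_n\goesun 0$, I would again use \Cref{un-qip} and show $\bignorm{\abs{f_n}\wedge\one}_p\to 0$. Fix $\varepsilon>0$ and split the integral of $(\abs{f_n}\wedge\one)^p$ over the sets $\{\abs{f_n}<\varepsilon\}$ and $\{\abs{f_n}\ge\varepsilon\}$. On the first set the integrand is at most $\varepsilon^p$, contributing at most $\varepsilon^p\mu(\Omega)$; on the second the integrand is at most $1$ (since we have truncated by $\one$), contributing at most $\mu\bigl(\{\abs{f_n}\ge\varepsilon\}\bigr)$, which tends to $0$ by hypothesis. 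Taking $n$ large makes the total $L_p$-norm small. The finiteness of $\mu$ is essential here: it guarantees both that $\one\in L_p(\mu)$ (so $\one$ is indeed a legitimate quasi-interior point) and that the contribution from the ``small'' set is finite.

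The main obstacle, such as it is, lies in cleanly justifying that $\one$ is a quasi-interior point of $L_p(\mu)$; this is standard for finite $\mu$ but deserves a brief remark, and it is precisely what fails for infinite measures. Beyond this, both directions reduce to the two-set splitting of an integral and are genuinely routine. Alternatively, one could deduce \Cref{un-Lp} from \Cref{uo-subseq}: since uo-convergence of sequences in $L_p(\mu)$ coincides with a.e.\ convergence, and a.e.\ convergence implies convergence in measure for finite $\mu$, one obtains that every subsequence of a un-null sequence has a further subsequence converging in measure to $0$, whence the whole sequence does; the converse direction would still require the direct estimate above.
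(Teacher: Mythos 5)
Your proof is correct, and half of it coincides with the paper's: for the direction $f_n\goesmu 0\Rightarrow f_n\goesun 0$, the paper likewise reduces to showing $\bignorm{\abs{f_n}\wedge\one}\to 0$ in $L_p(\mu)$ (it dismisses this as ``easy to see''; your two-set splitting of the integral is exactly the omitted computation) and then invokes \Cref{un-qip}. The genuine difference is in the direction $f_n\goesun 0\Rightarrow f_n\goesmu 0$. There the paper does not argue directly: it applies \Cref{uo-subseq} to every subsequence to extract further subsequences that are uo-null, identifies uo-convergence of sequences in $L_p(\mu)$ with a.e.\ convergence, and then concludes convergence in measure from the classical subsequence characterization recorded in \Cref{ae-mu}. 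Your Chebyshev-type estimate $\bignorm{\abs{f_n}\wedge\one}_p^p\ge\varepsilon^p\,\mu\bigl(\{\abs{f_n}\ge\varepsilon\}\bigr)$ for $\varepsilon\in(0,1)$ replaces all of that machinery by a one-line quantitative inequality; it is more elementary and self-contained, needing neither the uo/a.e.\ identification nor the subsequence principle. What the paper's route buys is precisely what a direct computation cannot: it exhibits the statement as a corollary of the abstract subsequence result \Cref{uo-subseq} (which is why it is placed immediately after it), and the same pattern of argument then transfers verbatim to \Cref{AL-un}, where the ambient space is an abstract order continuous Banach lattice with a weak unit and no integral formula is available. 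Your closing remark --- that one could alternatively derive this direction from \Cref{uo-subseq} together with the a.e./measure subsequence argument --- is in fact exactly the paper's proof.
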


\begin{proof}
  Without loss of generality, $f_n\ge 0$ for all $n$. Suppose $f_n\goesmu 0$. It is easy to see that $f_n\wedge\one\to 0$ in the norm of $L_p(\mu)$. It follows from \Cref{un-qip} that $f_n\goesun 0$.

Conversely, suppose that $f_n \goesun 0$. Then every subsequence $(f_{n_k})$ is still un-null and, therefore, by \Cref{uo-subseq}, has a further subsequence $f_{n_{k_i}}$ such that $f_{n_{k_i}} \goesuo 0$ and, therefore, $f_{n_{k_i}} \goesae 0$. This yields $f_n \goesmu 0$.  
\end{proof}

\begin{remark}\label{ae-mu}
  In the last step of the preceding proof, we used the fact that given a sequence of measurable functions over a measure space with a finite measure, the sequence converges in measure iff every subsequence has a further subsequence which converges a.e.\ (to the same limit); see, e.g., Exercise~22 in \cite[p.~96]{Royden:88}. Note that \Cref{uo-subseq} may be viewed as an extension of one of the directions of this equivalence to general Banach lattices. The next result shows that for order continuous Banach lattices the other direction extends as well.
\end{remark}

\begin{theorem} \label{un-subseq-uo}
  A sequence in an order continuous Banach lattice $X$ is un-null iff every subsequence has a further subsequence which uo-converges to zero.
\end{theorem}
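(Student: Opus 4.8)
The plan is to treat the two implications separately, since both follow quickly from results already established. The forward implication is the easier of the two. Suppose $x_n\goesun 0$. Given \emph{any} subsequence $(x_{n_k})$, \Cref{basic}\eqref{basic:subs} guarantees that $(x_{n_k})$ is again un-null, and then \Cref{uo-subseq} produces a further subsequence that uo-converges to zero. This is exactly the right-hand side of the equivalence, so nothing more is needed here.

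For the reverse implication I would argue by contradiction, exploiting the fact that a failure of un-convergence can always be witnessed by a single $u\in X_+$. Suppose $x_n\not\goesun 0$. Then there is some $u\in X_+$ for which $\bignorm{\abs{x_n}\wedge u}\not\to 0$, so we may pass to a subsequence $(x_{n_k})$ and fix $\varepsilon>0$ with $\bignorm{\abs{x_{n_k}}\wedge u}\ge\varepsilon$ for every $k$. By hypothesis, this subsequence has a further subsequence $(x_{n_{k_j}})$ with $x_{n_{k_j}}\goesuo 0$. Here is where order continuity enters: by \Cref{uo-oc-un}, uo-convergence implies un-convergence in $X$, whence $x_{n_{k_j}}\goesun 0$ and in particular $\bignorm{\abs{x_{n_{k_j}}}\wedge u}\to 0$. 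Since $(x_{n_{k_j}})$ is a subsequence of $(x_{n_k})$, its terms still satisfy $\bignorm{\abs{x_{n_{k_j}}}\wedge u}\ge\varepsilon$, and this contradiction completes the argument.

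I do not expect a genuine obstacle, as all the analytic content has been packaged into the earlier results; the proof is essentially the standard ``subsequence principle'' applied to un-convergence, with the contradiction localized to the scalar sequence $\bignorm{\abs{x_n}\wedge u}$ for a fixed gauge $u$. The one point deserving emphasis is that the reverse direction relies essentially on order continuity through \Cref{uo-oc-un}: this is precisely the mechanism that converts the uo-null further subsequence supplied by the hypothesis back into un-convergence. Without it the equivalence would break down, as the sequence $(e_n)$ in $\ell_\infty$ from \Cref{disj-ellinfty} shows, where every subsequence is uo-null yet the sequence is not un-null. The single-gauge reduction itself requires no boundedness or continuity assumption, so the only hypothesis actually consumed is order continuity.
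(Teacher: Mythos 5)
Your proof is correct and follows essentially the same route as the paper: the forward direction is \Cref{uo-subseq} applied to each (still un-null) subsequence, and the reverse direction is the identical contradiction argument fixing a gauge $u\in X_+$ and using \Cref{uo-oc-un} to convert the uo-null further subsequence back into a un-null one. Your closing remark about $(e_n)$ in $\ell_\infty$ also matches the paper's own observation that order continuity cannot be dropped.
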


\begin{proof}
  The forward implication is \Cref{uo-subseq}. To show the converse, assume that $x_n\not\goesun 0$. Then there exist $\delta>0$, $u \in X_+$, and a subsequence $(x_{n_k})$ such that $\bignorm{\abs{x_{n_k}}\wedge u}>\delta$ for all $k$. By assumption, there is a subsequence $(x_{n_{k_i}})$ of $(x_{n_k})$ such that $x_{n_{k_i}} \goesuo 0$, and, therefore, $x_{n_{k_i}} \goesun 0$ by  \Cref{uo-oc-un}. This yields $\abs{x_{n_{k_i}}}\wedge u\to 0$, which is a contradiction. 
\end{proof}

\begin{remark}									  Again, Example~\ref{disj-ellinfty} shows that the order continuity assumption cannot be removed.
\end{remark} 

Suppose that $X$ is an order continuous Banach lattice with a weak unit $e$. It is known that $X$ can be represented as an order and norm dense ideal in $L_1(\mu)$ for some finite measure $\mu$. That is, there is a vector lattice isomorphism $T\colon X\to L_1(\mu)$ such that $\Range T$ is an order and norm dense ideal of $L_1(\mu)$. Note that $T$ need not be a norm isomorphism, though $T$ may be chosen to be continuous and $Te=\one$. Moreover, $\Range T$ contains $L_\infty(\mu)$ as a norm and order dense ideal. It is common to identify $X$ with $\Range T$ and just view $X$ as an ideal of $L_1(\mu)$; we also identify $e$ with $\one$. We call such an inclusion of $X$ into an $L_1(\mu)$ space an \term{AL-representation} of $X$. We refer the reader to \cite[Theorem~1.b.14]{Lindenstrauss:79} or \cite[Section~4]{GTX} for more details on AL-representations.

It was observed in \cite[Remark~4.6]{GTX} that for a sequence $(x_n)$ in $X$, $x_n\goesuo 0$ in $X$ iff $x_n\goesae 0$ in $L_1(\mu)$. We prove an analogous result for un-convergence.

\begin{theorem}\label{AL-un}
  Let $X$ be an order continuous Banach lattice with a weak unit; let $L_1(\mu)$ be an AL-representation for $X$. For a sequence $(x_n)$ in $X$, we have $x_n \goesun 0$ in $X$ iff $x_n \goesmu 0$ in $L_{1}(\mu)$.
\end{theorem}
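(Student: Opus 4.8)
The plan is to avoid any direct norm comparisons and instead chain together the subsequence characterization of un-convergence just established in \Cref{un-subseq-uo} with the two classical facts that identify, on the one side, uo-convergence with almost everywhere convergence, and, on the other side, convergence in measure with the existence of a.e.-convergent further subsequences. The whole argument is then a matter of lining up equivalences.

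First I would record the bridge between the lattice side and the measure side. Since $X$ is order continuous with weak unit $e$, in the AL-representation $e$ is identified with $\one$, and \cite[Remark~4.6]{GTX} gives that for the sequence $(x_n)$ in $X$ one has $x_n \goesuo 0$ in $X$ iff $x_n \goesae 0$ in $L_1(\mu)$. Next I would apply \Cref{un-subseq-uo}, which is available precisely because $X$ is order continuous: $x_n \goesun 0$ in $X$ iff every subsequence of $(x_n)$ has a further subsequence that uo-converges to zero in $X$. Substituting the bridge equivalence into this statement, $x_n \goesun 0$ in $X$ iff every subsequence of $(x_n)$ has a further subsequence converging to zero a.e.\ in $L_1(\mu)$.

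Finally I would invoke the standard measure-theoretic fact recalled in \Cref{ae-mu}: over a finite measure space, a sequence converges in measure to zero iff every subsequence has a further subsequence converging a.e.\ to zero. Because $\mu$ is finite, this converts the condition from the previous step directly into $x_n \goesmu 0$ in $L_1(\mu)$, which closes the equivalence.

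There is no genuinely hard step here; the only content is checking that each cited result applies, and the single point not to gloss over is that \Cref{un-subseq-uo} truly needs order continuity (as \Cref{disj-ellinfty} witnesses), so the theorem is inseparable from that hypothesis. As a cross-check one could argue directly instead: \Cref{un-qip} reduces $x_n \goesun 0$ to $\bignorm{\abs{x_n}\wedge\one}\to 0$ in $X$, and since $0\le\abs{x_n}\wedge\one\le\one$ one compares the $X$-norm with the $L_1$-norm on $[0,\one]$ — the forward inequality coming from continuity of the representation map and the reverse from order continuity via a dominated a.e.-convergence argument. I expect the subsequence route above to be the cleaner of the two.
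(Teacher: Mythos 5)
Your proposal is correct and follows exactly the paper's own route: apply \Cref{un-subseq-uo} to characterize un-convergence via uo-convergent further subsequences, translate uo-convergence to a.e.\ convergence via \cite[Remark~4.6]{GTX}, and finish with the measure-theoretic subsequence characterization of convergence in measure recalled in \Cref{ae-mu}. The alternative direct argument you sketch at the end is not needed; the chain of equivalences is the whole proof.
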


\begin{proof}
  By \Cref{un-subseq-uo}, $x_n\goesun 0$ in $X$ iff for every subsequence $(x_{n_k})$ there is a further subsequence $(x_{n_{k_i}})$ such that $x_{n_{k_i}}\goesuo 0$. The latter is equivalent to $x_{n_{k_i}}\goesae 0$. Now apply \Cref{ae-mu}.
\end{proof}

\section{When do un- and uo-convergences agree?}

Our next goal is to prove that uo- and un-convergences for sequences agree iff $X$ is order continuous and atomic. Recall that a non-zero element $a\in X_+$ is an \term{atom} iff the ideal $I_a$ consists only of the scalar multiples of $a$. In this case, $I_a$ is a projection band. Let $P_a$ be the corresponding band projection. We say that $X$ is \term{atomic} or \term{discrete} if it equals the band generated by all the atoms in it. Suppose that $X$ is atomic, and fix a maximal disjoint collection $A$ of atoms in $X$. For every $x\in X_+$, we have $x=\bigvee_{a\in A}x_aa$, where $x_aa=P_ax$. One can also write this as $x=\sum_{a\in A}x_aa$, where the sum is understood as the order limit (or the supremum) of sums over finite subsets of $A$. This sum may be viewed as a coordinate expansion of $x$ over $A$. Furthermore, if we are also given a positive vector $y=\sum_{a\in A}y_aa$, then $x\le y$ iff $x_a\le y_a$ for every $a\in A$. Suppose now that, in addition, $X$ is order continuous. Then it can be shown that only countably many of coefficients $x_a$ are non-zero. Enumerating them, we get $x=\sum_{i=1}^\infty x_{a_i}a_i$, where the series converges in order and, therefore, in norm. For details, see \cite{Aliprantis:03}, Exercise~7 in \cite[p.~143]{Schaefer:74}, and the proof of Proposition~1.a.9 in~\cite{Lindenstrauss:79}. We will need two standard lemmas.

\begin{lemma}\label{atom-oconv}
  Suppose that $X$ is atomic and order continuous, and $(x_n)$ is an order bounded sequence in $X$. If $x_n\to 0$ then $x_n\goeso 0$. 
\end{lemma}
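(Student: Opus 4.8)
The plan is to exploit the Dedekind completeness furnished by order continuity together with the coordinate expansion over atoms, in order to control the tail suprema $\sup_{m\ge n}\abs{x_m}$.

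First I would record the two structural facts. Since $X$ is order continuous it is Dedekind complete, and since $(x_n)$ is order bounded we may fix $u\in X_+$ with $\abs{x_n}\le u$ for all $n$. Consequently the suprema $w_n:=\sup_{m\ge n}\abs{x_m}$ all exist and lie in the order interval $[0,u]$. The sequence $(w_n)$ is decreasing, so $w_n\downarrow w$ for some $w\in[0,u]$, and it suffices to prove $w=0$: indeed, then $w_n\downarrow 0$ and $\abs{x_n}\le w_n$ give $x_n\goeso 0$ straight from the definition of order convergence.

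Next I would establish coordinatewise convergence of $(x_n)$. Fix a maximal disjoint family $A$ of atoms. For each $a\in A$ the band projection $P_a$ satisfies $0\le P_a\le I$, hence is contractive; writing $P_a x_n=(x_n)_a\,a$ we obtain $\abs{(x_n)_a}\,\norm{a}=\norm{P_a x_n}\le\norm{x_n}\to 0$, so $(x_n)_a\to 0$ for every $a\in A$. The key computation is then to read off each coordinate of $w$. Because $P_a$ is order continuous, it commutes with the relevant suprema and infima, so $P_a w=\inf_n P_a w_n=\inf_n\sup_{m\ge n}P_a\abs{x_m}=\bigl(\inf_n\sup_{m\ge n}\abs{(x_m)_a}\bigr)a=\bigl(\limsup_m\abs{(x_m)_a}\bigr)a=0$ by the previous step. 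Thus $P_a w=0$ for every $a\in A$; since $X$ is atomic, $w=\sum_{a\in A}P_a w=0$, which completes the argument.

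The step I expect to require the most care is the commutation of $P_a$ with the order limits, namely that the band projection preserves the suprema $\sup_{m\ge n}\abs{x_m}$ and the infimum $\inf_n w_n$. This is exactly order continuity of band projections and can be verified directly from $0\le P_a\le I$ and the decomposition $I=P_a+(I-P_a)$; once it is in hand, atomicity does the rest. I would also cite the standard facts that an order continuous Banach lattice is Dedekind complete and that band projections are contractive.
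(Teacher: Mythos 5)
Your proof is correct, but it takes a genuinely different route from the paper's. The paper works with the coordinate expansion of the order bound $u$: using order continuity it writes $u=\sum_{i=1}^\infty u_ia_i$ over countably many atoms, extracts coordinatewise convergence of $(x_n)$ exactly as you do, and then \emph{explicitly constructs} a dominating sequence $v_k=\sum_{i=1}^k(\tfrac1k\wedge u_i)a_i+\sum_{i=k+1}^\infty u_ia_i\downarrow 0$ with $x_n\le v_k$ for large $n$. You instead run an abstract $\limsup$ argument: order continuity gives Dedekind completeness, so the tail suprema $w_n=\sup_{m\ge n}\abs{x_m}$ exist and decrease to some $w$, and you kill $w$ coordinate by coordinate using the facts that band projections are contractive, are lattice homomorphisms, and are order continuous (all standard, and your sketch of why $0\le P_a\le I$ yields order continuity is sound); atomicity then forces $w=0$ since $w$ is disjoint from every atom. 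What each approach buys: yours avoids the structure theory of atomic order continuous lattices (countable supports, norm/order convergent coordinate series) and invokes order continuity of the norm only through Dedekind completeness, making the argument shorter and closer to the classical ``$\limsup=0$'' proof of a.e.-type statements; the paper's construction is more elementary in its prerequisites (no tail suprema needed, only the coordinate expansion it has already set up in the surrounding text) and exhibits the dominating sequence $(v_k)$ concretely, which makes the verification of the definition of o-convergence completely transparent. One small point of care in your write-up: the identity $P_a w_n=\sup_{m\ge n}P_a\abs{x_m}$ needs $P_a$ to preserve the supremum of a countable (not necessarily directed) family; this follows by passing through finite suprema, which $P_a$ preserves as a lattice homomorphism, and then taking the increasing limit, which order continuity preserves — worth spelling out, but not a gap.
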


\begin{proof}
  Without loss of generality, $x_n\ge 0$ for all $n$. Let $u\in X_+$ such that $x_n\le u$ for every $n$. There is a sequence of distinct atoms $(a_i)$ in $A$ such that $u=\sum_{i=1}^\infty u_ia_i$ for some coefficients $(u_i)$; the series converges in norm and in order. Given $n\in\mathbb N$, it follows from $0\le x_n\le u$ that we can write $x_n=\sum_{i=1}^\infty x_{ni}a_i$ for some sequence of coefficients $(x_{ni})$. Note that $0\le x_{ni}a_i\le x_n$ for every $n$ and $i$; it follows that $\lim_nx_{ni}=0$ for every $i$; that is, the sequence $(x_n)$ converges to zero ``coordinate-wise''. 

For each $k\in\mathbb N$, define 
\begin{displaymath}
  v_k=\sum_{i=1}^k(\tfrac1k\wedge u_i)a_i+\sum_{i=k+1}^\infty u_ia_i.
\end{displaymath}
It is easy to see that $v_k\downarrow 0$. On the other hand, since $x_n\le u$ and $(x_n)$ converges to zero coordinate-wise, for every $k$ we can find $n_k$ such that $x_n\le v_k$ whenever $n\ge n_k$. It follows that $x_n\goeso 0$.
\end{proof}

\begin{lemma}\label{mu-nonatom}
  If $\mu$ is a finite non-atomic measure then there exists a sequence $(f_n)$ in $L_\infty(\mu)$ which converges to zero in measure but not a.e..
\end{lemma}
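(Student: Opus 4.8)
The plan is to build a ``sliding bump'' (typewriter) sequence of indicator functions whose supports shrink in measure but sweep across the whole space infinitely often. Such a sequence converges to zero in measure yet fails to converge a.e. We may clearly assume $\mu(\Omega)>0$, where $\Omega$ denotes the underlying space and $M:=\mu(\Omega)$.

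First I would recall the standard intermediate-value property of non-atomic measures: if $\mu$ is finite and non-atomic, then for every measurable set $A$ and every $t$ with $0\le t\le\mu(A)$ there is a measurable $B\subseteq A$ with $\mu(B)=t$. Applying this repeatedly (bisection), for each $k\ge 0$ I would partition $\Omega$ into $2^k$ measurable sets $A_{k,0},\dots,A_{k,2^k-1}$, each of measure $M/2^k$, choosing the level-$(k+1)$ partition to refine the level-$k$ one. I would then enumerate the indicators $\one_{A_{k,j}}$ level by level, listing the single function for $k=0$, then the two for $k=1$, then the four for $k=2$, and so on, producing a sequence $(f_n)$ in $L_\infty(\mu)$ with $0\le f_n\le\one$.

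Next I would verify the two required properties. For convergence in measure: if $f_n=\one_{A_{k,j}}$ then for $0<\varepsilon<1$ we have $\mu\bigl(\{f_n>\varepsilon\}\bigr)=\mu(A_{k,j})=M/2^k$, and since $k\to\infty$ as $n\to\infty$ this tends to $0$; hence $f_n\goesmu 0$. For the failure of a.e.\ convergence: because $A_{k,0},\dots,A_{k,2^k-1}$ partition $\Omega$ for each fixed $k$, every point $x\in\Omega$ lies in exactly one of them, so among the level-$k$ functions exactly one takes the value $1$ at $x$. Thus $f_n(x)=1$ for infinitely many $n$, giving $\limsup_n f_n(x)=1\ne 0$ for every $x$. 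Consequently $(f_n)$ converges to zero nowhere, and in particular not $\goesae 0$.

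The only substantive ingredient is the partition of $\Omega$ into pieces of equal (hence vanishing) measure, which rests entirely on non-atomicity; everything else is routine bookkeeping. I therefore expect the main (and rather minor) obstacle to be stating and citing the non-atomic splitting lemma cleanly, rather than any genuine difficulty in the argument.
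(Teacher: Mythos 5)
Your proof is correct and takes essentially the same approach as the paper: the paper writes down the typewriter sequence for Lebesgue measure on $[0,1]$ and then, for the general case, invokes Exercise~2 of Halmos (p.~174) --- which is precisely the non-atomic intermediate-value (splitting) property you use --- to ``produce a similar sequence.'' Your write-up simply carries out in full detail the dyadic-partition construction that the paper delegates to that citation.
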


\begin{proof}
  In the special case when $\mu$ is the Lebesgue measure on the unit interval, we take $(f_n)$ to be the ``typewriter'' sequence 
  \begin{displaymath}
    f_n=\chi_{[\frac{n-2^k}{2^k}, \frac{n-2^k +1}{2^k}]}
    \text{ where $k\ge 0$ such that }
    2^k\le n<2^{k+1}.
  \end{displaymath}
 In the general case, we produce a similar sequence using Exercise~2 in \cite[p.~174]{Halmos:70}.
\end{proof}

\begin{theorem}\label{un-iff-uo}
The following are equivalent:
\begin{enumerate}
\item\label{un-iff-uo-iff} $x_n \goesuo 0 \iff x_n \goesun 0$ for every sequence $(x_n)$ in $X$;
\item\label{un-iff-uo-atom} $X$ is order continuous and atomic. 
\end{enumerate}
\end{theorem}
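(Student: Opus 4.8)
The plan is to prove the two implications separately, establishing $\eqref{un-iff-uo-atom}\Rightarrow\eqref{un-iff-uo-iff}$ directly and $\eqref{un-iff-uo-iff}\Rightarrow\eqref{un-iff-uo-atom}$ by contraposition. For the forward direction, assume $X$ is order continuous and atomic. The implication $x_n\goesuo 0\Rightarrow x_n\goesun 0$ is immediate from \Cref{uo-oc-un} and uses only order continuity. For the converse, suppose $x_n\goesun 0$ and fix $u\in X_+$. The sequence $\abs{x_n}\wedge u$ is order bounded by $u$ and tends to zero in norm, so \Cref{atom-oconv} yields $\abs{x_n}\wedge u\goeso 0$; as $u$ is arbitrary, $x_n\goesuo 0$. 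This part is routine once \Cref{atom-oconv} is in hand.

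For $\eqref{un-iff-uo-iff}\Rightarrow\eqref{un-iff-uo-atom}$ I would argue contrapositively in two independent steps. First, suppose $X$ is \emph{not} order continuous. Then there is an order bounded disjoint sequence $(d_n)$ that fails to be norm null. Being disjoint, $(d_n)$ is uo-null; being order bounded with $\norm{d_n}\not\to 0$, it is \emph{not} un-null by \Cref{norm-un}. Such a sequence violates \eqref{un-iff-uo-iff}, so \eqref{un-iff-uo-iff} forces order continuity.

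Second, suppose $X$ is order continuous but \emph{not} atomic. Let $B$ be the disjoint complement of the band generated by all atoms; since $X$ is order continuous this is a nonzero projection band, and one checks it contains no atoms (an atom of $B$ would be an atom of $X$ lying in $B$, hence in $B\cap B^d=\{0\}$). Fix $0<e\in B$ and let $B_e$ be the band generated by $e$: it is an order continuous Banach lattice with weak unit $e$ and no atoms, so its AL-representation $B_e\hookrightarrow L_1(\mu)$ has $\mu$ non-atomic and $L_\infty(\mu)\subseteq B_e$. Take the sequence $(f_n)\subseteq L_\infty(\mu)$ from \Cref{mu-nonatom}, converging to zero in measure but not a.e. By \Cref{AL-un} applied to $B_e$, we get $f_n\goesun 0$ in $B_e$. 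The transfer to $X$ rests on the identity $\abs{f_n}\wedge u=\abs{f_n}\wedge Pu$ for $u\in X_+$, where $P$ is the band projection onto $B_e$: indeed $\abs{f_n}\wedge u\le\abs{f_n}\in B_e$ forces $\abs{f_n}\wedge u=P(\abs{f_n}\wedge u)=\abs{f_n}\wedge Pu$ since $P$ is a lattice homomorphism fixing $\abs{f_n}$. Hence $f_n\goesun 0$ in $X$. On the other hand, $B_e$ is a regular sublattice, so uo-nullity in $X$ and in $B_e$ coincide by \cite[Theorem~3.2]{GTX}, and uo-nullity in $B_e$ is equivalent to a.e.\ convergence via the AL-representation; since $(f_n)$ does not converge a.e., it is not uo-null in $X$. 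This again contradicts \eqref{un-iff-uo-iff}, so $X$ must be atomic.

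The main obstacle is the non-atomic case: one must manufacture a sequence that is un-null but not uo-null, and the delicate point is that un-convergence does \emph{not} in general pass between a sublattice and the ambient space (cf.\ \Cref{un-sublat-not}). The argument sidesteps this by working inside the principal band $B_e$, where the band projection lets one replace $\abs{f_n}\wedge u$ by $\abs{f_n}\wedge Pu$ and thereby push un-convergence up to $X$, while \Cref{AL-un} together with the AL-representation supplies the measure-theoretic counterexample. The order continuity step, by contrast, reduces cleanly to the fact that order bounded disjoint sequences witness failure of order continuity.
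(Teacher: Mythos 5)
Your proof is correct and takes essentially the same approach as the paper: order continuity is extracted from order bounded disjoint sequences (via \Cref{norm-un} and the uo-nullity of disjoint sequences), and atomicity is forced by building a non-atomic principal band with an AL-representation and importing the measure-theoretic counterexample of \Cref{mu-nonatom}. Your explicit band-projection argument showing $\abs{f_n}\wedge u=\abs{f_n}\wedge Pu$, which pushes un-convergence from $B_e$ up to $X$, is a welcome addition, since the paper's phrase ``and, therefore, in $X$'' leaves precisely this step (nontrivial in view of \Cref{un-sublat-not}) implicit.
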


\begin{proof}
  \eqref{un-iff-uo-atom}$\Rightarrow$\eqref{un-iff-uo-iff} Suppose $X$ is order continuous and atomic. The implication $x_n \goesuo 0$ $\Rightarrow$ $x_n \goesun 0$ is trivial, and the reverse implication follows immediately from \Cref{atom-oconv}.

\eqref{un-iff-uo-iff}$\Rightarrow$ \eqref{un-iff-uo-atom} Let $(x_n)$ be a disjoint order bounded sequence in $X$. Then $x_n\goesuo 0$ by \cite[Corollary~3.6]{GTX}. By assumption, $x_n\goesun 0$. Since the sequence is order bounded, this yields $x_n\to 0$. Hence, $X$ is order continuous. It follows that every closed ideal in $X$ is a projection band.

It remains to show that $X$ is atomic. Suppose not; then the band $X_1$ generated by all the atoms in $X$ is a proper subset of $X$. Let $X_2$ be the complementary band. Fix a non-zero $w\in X_2^+$, and let $Y=B_w$, the band generated by $w$ in $X$. Clearly, $Y\subseteq X_2$, $Y$ is order continuous, $w$ is a weak unit in $Y$, and $Y$ has no atoms. We can find an AL-representation for $Y$ such that $L_\infty(\mu)\subseteq Y\subseteq L_1(\mu)$. Since $Y$ has no atoms, it is easy to see that $\mu$ is a non-atomic measure.

By \Cref{mu-nonatom}, there is a sequence $(x_n)$ in $L_\infty(\mu)$ such that $x_n\goesmu 0$ but $x_n\not\goesae 0$. It follows that $x_n\goesun 0$ but $x_n\not\goesuo 0$ in $Y$ and, therefore, in $X$; a contradiction.
\end{proof}

\section{Un-convergence and weak convergence}

In this section, we consider the relationship between un- and weak convergences. For a mononote net, weak convergence implies norm convergence, and, therefore, un-convergence; however, weak convergence does not imply un-convergence in general. For example, the following fact was observed in \cite[Theorem~2.2]{Chen:98}:

\begin{lemma}(\cite{Chen:98})\label{Chen}
   If $X$ is non-atomic and order continuous, and $x\in X_+$, then there exists a sequence $(x_n)$ such that $x_n\goesw 0$ yet $\abs{x_n}=x$ for all $n$.
\end{lemma}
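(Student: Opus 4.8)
The plan is to reduce to the principal band generated by $x$, represent that band as an ideal in an $L_1$ space via an AL-representation, and there realize the desired sequence as a Rademacher-type system of signs. We may assume $x>0$. First I would pass to the band $B_x$ generated by $x$. As a band it is a closed ideal, hence order continuous and non-atomic (an atom of $B_x$ would be an atom of $X$), and $x$ is a weak unit of $B_x$. The key reduction is that weak convergence in $B_x$ forces weak convergence in $X$: every $f\in X^*$ restricts to an element $f|_{B_x}\in(B_x)^*$, so $f(x_n)=(f|_{B_x})(x_n)\to 0$ as soon as $x_n\goesw 0$ in $B_x$. Thus it suffices to build the sequence inside $B_x$.

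Next I would fix an AL-representation $L_\infty(\mu)\subseteq B_x\subseteq L_1(\mu)$ with $x=\one$, as described before \Cref{AL-un}. Since $B_x$ has no atoms, $\mu$ is a finite non-atomic measure (exactly as in the proof of \Cref{un-iff-uo}). Using non-atomicity I would construct a Rademacher-type system $(\epsilon_n)$ in $L_\infty(\mu)$: split $\Omega$ into two sets $A$ and $\Omega\setminus A$ of equal measure and put $\epsilon_1=\chi_A-\chi_{\Omega\setminus A}$, then repeatedly halve each piece of the resulting dyadic partition to define $\epsilon_2,\epsilon_3,\dots$. By construction $\abs{\epsilon_n}=\one=x$, and the $\epsilon_n$ are pairwise orthogonal in $L_2(\mu)$. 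As $\epsilon_n\in L_\infty(\mu)\subseteq B_x$, each is a genuine element of $B_x$ with $\abs{\epsilon_n}=x$, so it only remains to verify weak nullity.

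To check $\epsilon_n\goesw 0$ in $B_x$ I would exploit order continuity. Any $g\in(B_x)^*$ is order continuous on $B_x$; restricted to the ideal $L_\infty(\mu)$ it is an order continuous functional on $L_\infty(\mu)$ and is therefore represented by some $h_g\in L_1(\mu)$, i.e. $g(\phi)=\int\phi\,h_g\,d\mu$ for $\phi\in L_\infty(\mu)$. Hence $g(\epsilon_n)=\int\epsilon_n h_g\,d\mu$, and it is enough to prove $\int\epsilon_n h\,d\mu\to 0$ for every $h\in L_1(\mu)$. This is the classical fact that the Rademacher system tends to zero against every integrable function: after normalization the $\epsilon_n$ form an orthonormal sequence in $L_2(\mu)$ and are thus weakly null there, which gives $\int\epsilon_n h\,d\mu\to 0$ for $h\in L_2(\mu)$; for general $h\in L_1(\mu)$ one approximates by an $L_\infty$ function and uses the uniform bound $\abs{\epsilon_n}=\one$. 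Setting $x_n=\epsilon_n$ finishes the proof.

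The step I expect to be the main obstacle is the duality bookkeeping in the last paragraph: one must ensure that an \emph{arbitrary} continuous functional on $B_x$ is captured by the $L_1$-pairing on $L_\infty(\mu)$, and this is exactly where order continuity of $X$ is indispensable. The construction of the signs is routine once $\mu$ is known to be non-atomic, and the weak nullity of Rademacher functions is classical; the care lies in transporting that classical fact correctly through the band $B_x$ and its AL-representation.
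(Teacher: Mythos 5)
Your proof is correct, but note that the paper does not actually prove this lemma at all --- it is quoted verbatim from \cite{Chen:98} (Theorem~2.2 there), so there is no internal argument to compare against; what you have done is reconstruct the proof that the paper outsources to the citation. Your reduction to the band $B_x$ is sound: a band in an order continuous space is itself order continuous, an atom of the ideal $B_x$ is an atom of $X$, $x$ is a weak unit of $B_x$, and weak nullity in $B_x$ passes to $X$ by restricting functionals. The AL-representation step and the observation that non-atomicity of $B_x$ forces $\mu$ to be non-atomic are exactly the moves this paper makes in the proof of \Cref{un-iff-uo}, so your argument sits squarely within the paper's own toolkit; the dyadic-halving construction of the signs $\epsilon_n$ is legitimate because a finite non-atomic measure has the intermediate-value property. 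The duality bookkeeping, which you correctly flag as the delicate point, goes through: every $g\in(B_x)^*$ is order continuous because the norm of $B_x$ is order continuous; since $L_\infty(\mu)$ is an ideal in $B_x$ (hence a regular sublattice), the restriction $g|_{L_\infty(\mu)}$ is again order continuous, and it is norm bounded on $L_\infty(\mu)$ because the inclusion $L_\infty(\mu)\hookrightarrow B_x$ is continuous (closed graph, via the continuous inclusions into $L_1(\mu)$); splitting $g$ into positive and negative parts and applying Radon--Nikodym then yields the representing density $h_g\in L_1(\mu)$. The final step --- Bessel's inequality for the normalized $\epsilon_n$ in $L_2(\mu)$, then approximation of $h\in L_1(\mu)$ by bounded functions using the uniform bound $\abs{\epsilon_n}=\one$ --- is the classical Rademacher argument and is correct. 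In substance this is the Chen--Wickstead proof itself, so your route is the expected one; the only details a referee might ask you to spell out are the continuity of the inclusion $L_\infty(\mu)\hookrightarrow B_x$ and the order boundedness of $g|_{L_\infty(\mu)}$ before invoking Radon--Nikodym.
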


Clearly, $(x_n)$ is not un-null.

\begin{proposition} \label{w-un}
  The following are equivalent:
  \begin{enumerate}
    \item\label{w-un-to} $x_n \goesw 0$ implies $x_n \goesun 0$ for every sequence $(x_n)$ in $X$;
  \item\label{w-un-atom} $X$ is order continuous and atomic.
  \end{enumerate}
\end{proposition}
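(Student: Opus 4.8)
The plan is to prove both implications, mirroring the structure of the proof of \Cref{un-iff-uo} but with weak convergence in place of uo-convergence. The role played there by \cite[Corollary~3.6]{GTX} (disjoint order bounded sequences are uo-null) will here be played by the observation that \emph{positive order bounded disjoint sequences are automatically weakly null}, and the role of \Cref{mu-nonatom} will be played by \Cref{Chen}.

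For \eqref{w-un-atom}$\Rightarrow$\eqref{w-un-to}, suppose $X$ is order continuous and atomic, and fix a maximal disjoint collection $A$ of atoms. For each $a\in A$ the band projection $P_a$ is a contraction, so the coefficient map $x\mapsto x_a$ (where $P_ax=x_aa$) is a bounded linear functional. Hence $x_n\goesw 0$ forces $x_{n,a}\to 0$, and since $P_a\abs{x_n}=\abs{x_{n,a}}a$ we also get $\abs{x_n}_a\to 0$ for every $a$; that is, $(\abs{x_n})$ is coordinate-wise null. Now fix $u\in X_+$. The sequence $(\abs{x_n}\wedge u)$ is order bounded by $u$ and still coordinate-wise null. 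Arguing exactly as in the proof of \Cref{atom-oconv}---writing $u=\sum_i u_ia_i$ and using the decreasing vectors $v_k\downarrow 0$ defined there---an order bounded, coordinate-wise null sequence is order null, so $\abs{x_n}\wedge u\goeso 0$ and hence $\norm{\abs{x_n}\wedge u}\to 0$ by order continuity. As $u$ was arbitrary, $x_n\goesun 0$.

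For the converse \eqref{w-un-to}$\Rightarrow$\eqref{w-un-atom}, I would first establish order continuity. Suppose $X$ is not order continuous. By the standard characterization of order continuity \cite{Aliprantis:06}, there is a disjoint order bounded sequence that is not norm null; taking moduli and passing to a subsequence we may assume it is a sequence $(d_n)$ in $X_+$ with $d_n\le u$ for all $n$ and $\norm{d_n}\ge\varepsilon$ for some $\varepsilon>0$. The key step is that such a sequence is weakly null: since the $d_n$ are disjoint and positive, $\sum_{k=1}^n d_k=\bigvee_{k=1}^n d_k\le u$, so for every $f\in X^*_+$ the partial sums $\sum_{k=1}^n f(d_k)=f\bigl(\bigvee_{k=1}^n d_k\bigr)$ are bounded by $f(u)$; thus $\sum_k f(d_k)$ converges and $f(d_k)\to 0$. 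Writing an arbitrary functional as a difference of positive ones gives $d_n\goesw 0$. By \eqref{w-un-to} we then have $d_n\goesun 0$, and since $(d_n)$ is order bounded, \Cref{norm-un} yields $d_n\to 0$ in norm, contradicting $\norm{d_n}\ge\varepsilon$. Hence $X$ is order continuous.

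It remains to deduce atomicity, and here I would argue by contradiction exactly as in \Cref{un-iff-uo}. If $X$ is order continuous but not atomic, let $X_2$ be the band complementary to the band generated by the atoms, pick $0\ne w\in X_2^+$, and set $Y=B_w$; then $Y$ is an order continuous, non-atomic Banach lattice with weak unit $w$. Applying \Cref{Chen} to $Y$ with $x=w$ produces a sequence $(x_n)$ in $Y$ with $x_n\goesw 0$ in $Y$ and $\abs{x_n}=w$ for all $n$. Since $\abs{x_n}\wedge w=w$, the sequence is not un-null in $Y$; and since $Y$ is a closed sublattice, weak nullity in $Y$ passes to $X$ (restrict functionals), while, by the final remark of \Cref{disj-ellinfty}, failure of un-nullity in $Y$ forces failure in $X$. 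Thus $x_n\goesw 0$ in $X$ but $x_n\not\goesun 0$, contradicting \eqref{w-un-to}. The main obstacle is the order continuity step: unlike in the uo-setting of \Cref{un-iff-uo}, the hypothesis here only controls weakly null sequences, so the crux is the observation that positive order bounded disjoint sequences are automatically weakly null, which is precisely what makes the weak hypothesis strong enough to force norm nullity.
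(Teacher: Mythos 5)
Your proof is correct, and its overall skeleton is the one the paper uses: order continuity is extracted from disjoint order bounded sequences being weakly null, atomicity is forced by applying \Cref{Chen} to the non-atomic part, and the converse direction runs through atomicity plus order continuity. The genuine difference is in \eqref{w-un-atom}$\Rightarrow$\eqref{w-un-to}: the paper simply cites Lemma~6.14 of \cite{GTX} (in an atomic lattice, weakly null implies uo-null) and then invokes \Cref{uo-oc-un}, whereas you bypass uo-convergence entirely with a direct argument --- weak nullity gives coordinate-wise nullity via the band projections $P_a$, and then the $v_k\downarrow 0$ construction from \Cref{atom-oconv} (whose proof indeed only uses order boundedness plus coordinate-wise convergence, not norm convergence) upgrades $\abs{x_n}\wedge u$ to an order-null, hence norm-null, sequence. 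This makes the implication self-contained within the paper, at the cost of a somewhat longer argument; it also isolates the correct abstract statement (order bounded $+$ coordinate-wise null $\Rightarrow$ order null in atomic order continuous lattices), which is a nice dividend. Two smaller points of comparison: you supply the proof that positive disjoint order bounded sequences are weakly null (partial sums $\sum_{k=1}^n f(d_k)\le f(u)$), which the paper asserts without argument, and in the atomicity step you pass to the band $B_w$ to have a weak unit before applying \Cref{Chen}; this detour is harmless but unnecessary, since \Cref{Chen} requires only non-atomicity and order continuity, so it can be applied to the complementary band $X_2$ directly, as the paper does.
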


\begin{proof}
  \eqref{w-un-to}$\Rightarrow$\eqref{w-un-atom}
The proof is similar to that of \Cref{un-iff-uo}. Let $(x_n)$ be a disjoint order bounded sequence in $X$. Then $x_n \goesw 0$. By assumption, $x_n\goesun 0$. Since $(x_n)$ is order bounded, this yields $x_n\to 0$. Therefore, $X$ is order continuous.

  Now suppose that $X$ is not atomic. Then $X=X_1\oplus X_2$ where $X_1$ is the band generated by the atoms, and $X_2$ is the complementary band. Since $X$ is not atomic, we have $X_2\ne\{0\}$. Now apply \Cref{Chen} to $X_2$.

  \eqref{w-un-atom}$\Rightarrow$\eqref{w-un-to} Lemma~6.14 in \cite{GTX} asserts that if $X$ is atomic and $x_n\goesw  0$ then $x_n\goesuo 0$. If, in addition, $X$ is order continuous, this yields $x_n\goesun 0$.
\end{proof}

In the previous result, the conditions for a weakly-null sequence to be un-null are quite strong. On the other hand, in an arbitrary Banach lattice, this fact is true for monotone nets. If we remove the constraint that $X$ is atomic, then we obtain the following result. Recall that for an order bounded positive net $(x_\alpha)$ in an order continuous Banach lattice, $x_\alpha\goesw 0$ implies $x_\alpha\to 0$ by \cite[Theorem 4.17]{Aliprantis:06}.

\begin{proposition}
  Let $(x_\alpha)$ be a positive net in an order continuous Banach lattice $X$. If $x_\alpha\goesw 0$ then $x_\alpha\goesun 0$.
\end{proposition}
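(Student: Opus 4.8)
The plan is to reduce, for each fixed $u\in X_+$, the un-convergence test net $(x_\alpha\wedge u)$ to an order bounded positive weakly null net, and then invoke the recalled consequence of \cite[Theorem~4.17]{Aliprantis:06} that such a net must converge to $0$ in norm. So the whole argument is: fix $u$, show $x_\alpha\wedge u\goesw 0$, and conclude $\bignorm{x_\alpha\wedge u}\to 0$.

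First I would fix $u\in X_+$ and note that $0\le x_\alpha\wedge u\le u$ for all $\alpha$, so the net $(x_\alpha\wedge u)$ is positive and order bounded (by $u$). Next I would verify that $x_\alpha\wedge u\goesw 0$. For a positive functional $f\in X^*_+$, the order inequality $0\le x_\alpha\wedge u\le x_\alpha$ yields $0\le f(x_\alpha\wedge u)\le f(x_\alpha)$, and $f(x_\alpha)\to 0$ because $x_\alpha\goesw 0$; hence $f(x_\alpha\wedge u)\to 0$ for every $f\in X^*_+$. Since $X^*$ is a Banach lattice, an arbitrary $f\in X^*$ decomposes as $f=f^+-f^-$ with $f^+,f^-\in X^*_+$, so $f(x_\alpha\wedge u)=f^+(x_\alpha\wedge u)-f^-(x_\alpha\wedge u)\to 0$. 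This establishes $x_\alpha\wedge u\goesw 0$.

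Finally, applying the recalled fact to the order bounded positive weakly null net $(x_\alpha\wedge u)$ gives $\bignorm{x_\alpha\wedge u}\to 0$, and since $u\in X_+$ was arbitrary, $x_\alpha\goesun 0$. The one point that looks like it could be an obstacle is the passage $x_\alpha\goesw 0\Rightarrow x_\alpha\wedge u\goesw 0$, because the truncation $y\mapsto y\wedge u$ is not weakly continuous in general; but the order sandwich $x_\alpha\wedge u\le x_\alpha$ together with the positivity of functionals (and the lattice structure of $X^*$) dissolves this difficulty immediately, so I expect the proof to be short.
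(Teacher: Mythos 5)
Your proof is correct and is essentially the paper's own argument: both fix $u\in X_+$, use the sandwich $0\le x_\alpha\wedge u\le x_\alpha$ (together with positive functionals and $f=f^+-f^-$) to get $x_\alpha\wedge u\goesw 0$, and then apply the cited consequence of \cite[Theorem~4.17]{Aliprantis:06} to the positive, order bounded, weakly null net $(x_\alpha\wedge u)$ to conclude norm convergence. The only difference is that you spell out the weak-convergence step that the paper leaves implicit.
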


\begin{proof}
  For every $u\in X_+$ we have $0\le x_\alpha\wedge u\le x_\alpha$ for every $\alpha$. This yields $x_\alpha\wedge u\goesw 0$. Since this net is positive and order bounded, $x_\alpha\wedge u\to 0$. Hence, $x_\alpha\goesun 0$.
\end{proof}

We have now seen several conditions on $X$ that yield $x_n \goesw 0$ implies $x_n \goesun 0$. For the converse, we have the following.  

\begin{theorem}
   If $X^*$ is order continuous then $x_\alpha\goesun 0$ implies $x_\alpha\goesw 0$ for every norm bounded net $(x_\alpha)$ in $X_+$.
\end{theorem}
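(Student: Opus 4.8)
The plan is to argue by contradiction, using \Cref{KP} to replace the net by a norm bounded, positive, \emph{disjoint} sequence, and then to invoke the order continuity of $X^*$ to control that sequence. Since $X^*$ is itself a Banach lattice, every functional is a difference of positive ones, so it suffices to prove that $f(x_\alpha)\to 0$ for each $f\in X^*_+$. Suppose this fails for some $f\in X^*_+$. As $f(x_\alpha)\ge 0$ for all $\alpha$, there are then an $\varepsilon>0$ and a subnet $(y_\lambda)$ of $(x_\alpha)$ with $f(y_\lambda)\ge\varepsilon$ for every $\lambda$. By \Cref{basic} the subnet is still un-null; it is positive and norm bounded by the same constant $M$ as $(x_\alpha)$.

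Next I would apply \Cref{KP} to $(y_\lambda)$. Because the net is positive, the construction there produces an increasing sequence of indices $(\lambda_k)$ together with a \emph{positive} disjoint sequence $(d_k)$ such that $y_{\lambda_k}-d_k\to 0$ in norm. Two consequences follow. First, $\norm{d_k}\le\norm{y_{\lambda_k}}+\norm{y_{\lambda_k}-d_k}$ is eventually at most $M+1$, so $(d_k)$ is norm bounded. Second, $f(d_k)=f(y_{\lambda_k})-f(y_{\lambda_k}-d_k)\ge\varepsilon-\norm{f}\,\norm{y_{\lambda_k}-d_k}$, so $f(d_k)\ge\tfrac{\varepsilon}{2}=:\delta>0$ for all large $k$. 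Thus $(d_k)$ is a norm bounded, positive, disjoint sequence on which $f$ stays bounded away from zero.

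The decisive step is to extract a contradiction from the order continuity of $X^*$. For this I would pass to the dual lattice, which is Dedekind complete, and split $f$ into disjoint components along the $d_k$: for each $k$ let $f_k$ be the component of $f$ in the carrier of $d_k$, given on $X_+$ by $f_k(x)=\sup_n f(x\wedge n d_k)$. Then $0\le f_k\le f$, so $(f_k)$ is order bounded in $X^*$; each $f_k$ annihilates every positive $x$ with $x\perp d_k$, so the disjointness of $(d_k)$ forces the $f_k$ to be pairwise disjoint; and $\norm{f_k}\ge f_k(d_k)/\norm{d_k}=f(d_k)/\norm{d_k}\ge\delta/(M+1)$. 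This exhibits an order bounded disjoint sequence in $X^*$ that is not norm null, contradicting the order continuity of $X^*$. Alternatively one may quote directly the standard characterization that the dual of a Banach lattice is order continuous iff every norm bounded positive disjoint sequence is weakly null, which yields $f(d_k)\to 0$ at once; see, e.g., \cite{Aliprantis:06}.

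I expect the main obstacle to be precisely this last step: verifying that the carrier-components $f_k$ are genuinely disjoint and order bounded (so that the ``order bounded disjoint sequences are norm null'' form of order continuity applies), or, if one prefers to cite rather than construct, pinning down the exact statement of the dual-order-continuity characterization. Everything preceding it---the reduction to $f\in X^*_+$, the subnet extraction, and the application of \Cref{KP}, including the fact that norm boundedness of the net is exactly what makes $(d_k)$ norm bounded---is routine.
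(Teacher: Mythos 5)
Your proof is correct, but it takes a genuinely different route from the paper's. The paper argues directly, with no contradiction, no subsequences, and no disjointness: by Theorem~4.19 of \cite{Aliprantis:06}, order continuity of $X^*$ yields for each $f\in X^*_+$ and $\varepsilon>0$ a single $u\in X_+$ with $f\bigl((\abs{x}-u)^+\bigr)<\varepsilon$ for all $x$ in the unit ball (this uniform estimate is exactly where norm boundedness of the net enters), and then $f(x_\alpha)=f\bigl(x_\alpha-x_\alpha\wedge u\bigr)+f\bigl(x_\alpha\wedge u\bigr)\le\varepsilon+\norm{f}\,\bignorm{x_\alpha\wedge u}$, where the last term vanishes by un-convergence. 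You instead run a contradiction through \Cref{KP}: an un-null positive subnet is a norm perturbation of a positive disjoint sequence $(d_k)$ on which $f$ stays bounded away from zero, and dual order continuity is then invoked in its disjoint-sequence form. That form is citable: ``every norm bounded disjoint sequence in $X_+$ is weakly null'' is one of the equivalent conditions grouped with the condition the paper uses in Theorem~4.19 of \cite{Aliprantis:06}, so the step you flagged as the main obstacle is a one-line citation. Your hand construction also works: $f_k(x)=\sup_n f(x\wedge nd_k)$ is additive on $X_+$, satisfies $0\le f_k\le f$ and $f_k(d_k)=f(d_k)\ge\delta$, and for $j\ne k$ the Riesz--Kantorovich formula gives $(f_j\wedge f_k)(x)\le f_j(x\wedge nd_k)+f_k(x-x\wedge nd_k)$, where the first term is $0$ since $md_j\perp nd_k$, and the second tends to $0$ because $f_k(x\wedge nd_k)=f(x\wedge nd_k)\uparrow f_k(x)$; thus $(f_k)$ is an order bounded disjoint sequence in the order continuous lattice $X^*$ with $\norm{f_k}\ge f(d_k)/\norm{d_k}$ bounded below, a contradiction. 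One small repair: the statement of \Cref{KP} does not assert that $d_k\ge 0$ when the net is positive (only its proof shows this, via $d_k=(x_{\alpha_k}-v_k)^+$); if you prefer not to reopen that proof, replace $d_k$ by $d_k^+$, noting $\norm{d_k^-}\le\norm{d_k-y_{\lambda_k}}\to 0$. On balance, the paper's proof is shorter and independent of Section~3, while yours imports the Kade\v c--Pe{\l}czy{\'n}ski machinery but is more conceptual: it exhibits disjoint sequences as the only obstruction to an un-null net being weakly null.
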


\begin{proof}
  Suppose that $X^*$ is order continuous and $(x_\alpha)$ is a norm bounded net in $X_+$ with $x_\alpha\goesun 0$. Without loss of generality, $\norm{x_\alpha}\le 1$ for every $\alpha$. Since $X^*$ is order continuous, it follows from \cite[Theorem 4.19]{Aliprantis:06} that for every $\varepsilon>0$ and every $f\in X^*_+$ there exists $u\in X_+$ such that
  \begin{math}
    f\bigl(\abs{x}-\abs{x}\wedge u\bigr)<\varepsilon
  \end{math}
whenever $\norm{x}\le 1$. In particular,
  \begin{math}
    f\bigl(x_\alpha-x_\alpha\wedge u\bigr)<\varepsilon
  \end{math}
for every $\alpha$. It follows from $x_\alpha\wedge u\to 0$ that $f(x_\alpha)<\varepsilon$ for all sufficiently large $\alpha$. Hence, $f(x_\alpha)\to 0$ and, therefore, $x_\alpha\goesw 0$.
\end{proof}

We do not know whether the converse is true. If $X^*$ is not order continuous then $\ell_1$ is lattice embeddable in $X$; see, e.g., \cite[Theorem 4.69]{Aliprantis:06}. Let $(e_n)$ be the standard basis of $\ell_1$ viewed as a sequence in $X$. It is easy to see that $e_n \not \goesw 0$ in $\ell_1$ and, therefore, in $X$. It is also easy to see that $e_n \goesun 0$ in $\ell_1$. However, this does not imply that $e_n \goesun 0$ in $X$; see \Cref{un-sublat-not}.

\section{Un-convergence is topological}

It is well known that a.e.\ convergence is not topological; see, e.g.,
\cite{Ordman:66}. That is, this convergence is not given by a
topology. It follows that uo-convergence need not be topological in
general. We show that un-convergence is topological. Moreover, we
explicitly define the neighborhoods of this topology.

Given an $\varepsilon>0$ and a non-zero $u\in X_+$, we put
\begin{displaymath}
  V_{u,\varepsilon}=\bigl\{x\in X\mid \bignorm{\abs{x}\wedge u}<\varepsilon\bigr\}.
\end{displaymath}
Let $\mathcal N_0$ be the collection of all the sets of this form. We claim that $\mathcal N_0$ is a base of neighborhoods of zero for some Hausdorff linear topology. Once we establish this, we will be able to define arbitrary neighborhoods as follows: a subset $U$ of $X$ is a neighbourhood of $y$ if $y+V\subseteq U$ for some $V\in\mathcal N_0$. It follows immediately from the definition of un-convergence that $x_\alpha\goesun 0$ iff every set in $\mathcal N_0$ contains a tail of this net, hence the un-convergence is exactly the convergence given by this topology.

We have to verify that $\mathcal N_0$ is indeed a base of neighborhoods of zero; cf. \cite[Theorem~5.1]{Kelley:76} or \cite[Theorem 3.1.10]{Runde:05}.

First, every set in $\mathcal N_0$ trivially contains zero.

Second, we need to show that the intersection of any two sets in $\mathcal N_0$ contains another set in $\mathcal N_0$. Take $V_{u_1,\varepsilon_1}$ and $V_{u_2,\varepsilon_2}$ in $\mathcal N_0$. Put $\varepsilon=\varepsilon_1\wedge\varepsilon_2$ and $u=u_1\vee u_2$. We claim that 
\begin{math}
  V_{u,\varepsilon}\subseteq V_{u_1,\varepsilon_1}\cap V_{u_2,\varepsilon_2}.
\end{math}
Indeed, take any $x\in V_{u,\varepsilon}$. Then $\bignorm{\abs{x}\wedge u}<\varepsilon$.
It follows from $\abs{x}\wedge u_1\le\abs{x}\wedge u$ that
\begin{displaymath}
  \bignorm{\abs{x}\wedge u_1}\le\bignorm{\abs{x}\wedge u}
  <\varepsilon\le\varepsilon_1,
\end{displaymath}
so that $x\in V_{u_1,\varepsilon_1}$. Similarly,  $x\in V_{u_2,\varepsilon_2}$.

It is easy to see that $V_{u,\varepsilon}+V_{u,\varepsilon}\subseteq V_{u,2\varepsilon}$. 
This immediately implies that for every $U$ in $\mathcal N_0$ there exists $V\in\mathcal N_0$ such that $V+V\subseteq U$. It is also easy to see that for every $U\in\mathcal N_0$ and every scalar $\lambda$ with $\abs{\lambda}\le 1$ we have $\lambda U\subseteq U$. 

Next, we need to show that for every $U\in\mathcal N_0$ and every $y\in U$, there exists $V\in\mathcal N_0$ such that $y+V\subseteq U$. Let $y\in V_{u,\varepsilon}$ for some $\varepsilon>0$ and a non-zero $u\in X_+$. We need to find $\delta>0$ and a non-zero $v\in X_+$ such that $y+V_{v,\delta}\subseteq V_{u,\varepsilon}$. Put $v:=u$. It follows from $y\in V_{u,\varepsilon}$ that $\bignorm{\abs{y}\wedge u}<\varepsilon$; take
\begin{math}
  \delta:=\varepsilon-\bignorm{\abs{y}\wedge u}.
\end{math}
We claim that  $y+V_{v,\delta}\subseteq V_{u,\varepsilon}$. Let $x\in V_{v,\delta}$; it suffices show that $y+x\in V_{u,\varepsilon}$. Indeed,
\begin{math}
  \abs{y+x}\wedge u\le\abs{y}\wedge u+\abs{x}\wedge u,
\end{math}
so that
\begin{displaymath}
  \bignorm{\abs{y+x}\wedge u}\le
  \bignorm{\abs{y}\wedge u}+\bignorm{\abs{x}\wedge u}
  <\bignorm{\abs{y}\wedge u}+\delta=\varepsilon.
\end{displaymath}

Finally, in order to show that the topology is Hausdorff, we need to verify that $\bigcap\mathcal N_0=\{0\}$. Indeed, suppose that $0\ne x\in V_{u,\varepsilon}$ for all non-zero $u\in X_+$ and $\varepsilon>0$. In particular, $x\in V_{\abs{x},\varepsilon}$, so that $\norm{x}=\bignorm{\abs{x}\wedge\abs{x}}<\varepsilon$ for every $\varepsilon>0$; a contradiction.

Note that we could also conclude that the topology is linear and Hausdorff from \Cref{basic}.

\textbf{Acknowledgement.} We would like to thank Niushan Gao for valuable discussions.

\end{document}